\newtheorem{thm}{Theorem}[section]
\newtheorem{lem}[thm]{Lemma}
\newtheorem{defi}[thm]{Definition}
\newtheorem{cor}[thm]{Corollary}
\newtheorem{rem}[thm]{Remark}
\begin{document}

\title{The symplectic structure on the moduli space of line bundles on a noncommutative Azumaya surface}
\author{Fabian Reede\thanks{Mary Immaculate College, Limerick, fabianreede@gmx.net\newline This research was supported by the Deutsche Forschungsgemeinschaft (DFG) with a research fellowship}}
\maketitle

\begin{abstract}
In this note we prove that the moduli space of torsion-free modules of rank one over an Azumaya algebra on a $K3$-surface is an irreducible symplectic variety deformation equivalent to a Hilbert scheme of points on the $K3$-surface. 
\end{abstract}
\section*{Introduction}
Assume $X$ is a smooth projective $K3$-surface over $\mathbb{C}$ and let $\mathcal{A}$ be an Azumaya algebra on $X$. The algebra $\mathcal{A}$ is an example of a so called noncommutative projective surface and also an example of what is called a Calabi-Yau order, that is a noncommutative analogue of a classical $K3$-surface.
\medskip

\noindent Locally projective $\mathcal{A}$-modules of rank one can be considered as line bundles on this noncommutative surface. In (\cite{hst}) the authors construct moduli schemes for such line bundles. These schemes can be seen as noncommutative versions of the usual Picard schemes. By allowing torsion-free $\mathcal{A}$-modules and by fixing invariants, for example the Mukai vector, theses moduli schemes are shown to be projective schemes over $\mathbb{C}$. Furthermore the authors show that these moduli schemes are smooth and possess a symplectic structure. So one can ask the question: are these moduli schemes irreducible symplectic varieties (hyperk{\"a}hler manifolds)?
\medskip

\noindent There are 4 known classes of hyperk{\"a}hler manifolds: we  have the Hilbert schemes of points on a smooth projective $K3$-surface and the generalized Kummer varieties associated to an abelian surface, both of these classes are due to Beauville. Furthermore there is a class of 6-dimensional examples and one class of 10-dimensional examples, both classes are due to O'Grady using symplectic desingularization. All other known examples of hyperk{\"a}hler manifolds are deformation equivalent to one of the four examples mentioned above. So if the moduli spaces of line bundles on a noncommutative Azumaya surface are  hyperk{\"a}hler manifolds, do they give rise to a new class or are they also deformation equivalent to one of the four known classes?
\medskip

\noindent In this note we prove that these moduli spaces are in fact hyperk{\"a}hler manifolds and that they do not give rise to a new class of hyperk{\"a}hler manifolds. We prove that they belong to the first class by showing that they are deformation equivalent to appropriate Hilbert schemes. This is done by comparing the moduli spaces to moduli spaces constructed by Yoshioka in (\cite{yoshi}). Yoshioka proves that they are hyperk{\"a}hler manifolds deformation equivalent to Hilbert schemes, hence so are the moduli spaces of line bundles on a noncommutative Azumaya surface.
\tableofcontents

\section{Modules over Azumaya algebras and Brauer-Severi varieties}
In this section we assume $X$ is any fixed scheme of finite type over $\mathbb{C}$, that is $X\in Sch_{\mathbb{C}}$. Following (\cite{lehn}) we denote by $Sch_{\mathbb{C}}$ the category of schemes of finite type over $\mathbb{C}$ and for $X\in Sch_{\mathbb{C}}$ we denote the category of schemes of finite type over $X$ by $Sch_X$.   
\begin{defi}
A sheaf $\mathcal{A}$ of $\mathcal{O}_X$-algebras is called Azumaya algebra if for every closed point $x\in X$ the fiber $\mathcal{A}(x)=\mathcal{A}\otimes k(x)$ is a central simple algebra over the residue field $k(x)$.
\end{defi}
\begin{rem}
\normalfont The rank of an Azumaya algebra $\mathcal{A}$ is always a square, that is we have $rk(\mathcal{A})=r^2$. We also note that for every morphism $f: T\rightarrow X$ the $\mathcal{O}_T$-algebra $f^{*}\mathcal{A}$ is an Azumaya algebra on $T$.
\end{rem}
Define the functor 
\begin{equation*}
\mathcal{B}\mathcal{S}(\mathcal{A}): (Sch_X)^{op} \rightarrow Sets
\end{equation*}
which sends an $X$-scheme $f: T\rightarrow X$ to the set of left ideals $\mathcal{I}\subseteq \mathcal{A}_T$ such that $\mathcal{A}_T/\mathcal{I}$ is a locally free $\mathcal{O}_T$-module of rank $r(r-1)$, here $\mathcal{A}_T=f^{*}\mathcal{A}$.
\begin{lem}
The functor $\mathcal{B}\mathcal{S}(\mathcal{A})$ is representable by an $X$-scheme $\pi: BS(\mathcal{A})\rightarrow X$. The morphism $\pi$ is faithfully flat and proper and exhibits $BS(\mathcal{A})$ as an \'etale $\mathbb{P}^{r-1}$-bundle over $X$.
\end{lem}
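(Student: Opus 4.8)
\emph{Sketch of proof.} The plan is to construct $BS(\mathcal{A})$ by \'etale descent from the split case and then to read off every asserted property from the resulting \'etale-local structure. Observe first that $\mathcal{B}\mathcal{S}(\mathcal{A})$ is a sheaf for the fppf topology (hence for the \'etale topology) on $Sch_X$: being a left ideal, and having a locally free $\mathcal{O}_T$-module quotient of the fixed rank $r(r-1)$, are fppf-local conditions on $T$, and quasi-coherent modules together with their submodules satisfy fppf descent. So it is enough to represent $\mathcal{B}\mathcal{S}(\mathcal{A})$ after an \'etale base change splitting $\mathcal{A}$ and then to descend the representing object.

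The heart of the matter is the split case $\mathcal{A}_T=\mathcal{E}nd_{\mathcal{O}_T}(\mathcal{E})$ with $\mathcal{E}$ locally free of rank $r$ on $T$. For a rank-one locally free quotient $q\colon\mathcal{E}\twoheadrightarrow\mathcal{Q}$, the image of $\mathcal{H}om(\mathcal{Q},\mathcal{E})$ in $\mathcal{E}nd(\mathcal{E})=\mathcal{H}om(\mathcal{E},\mathcal{E})$ under precomposition with $q$ is a left ideal $\mathcal{I}_{\mathcal{Q}}\cong\mathcal{Q}^{\vee}\otimes\mathcal{E}$, with quotient $\mathcal{H}om(\ker q,\mathcal{E})\cong(\ker q)^{\vee}\otimes\mathcal{E}$ locally free of rank $r(r-1)$. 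This gives a natural transformation from $\mathbb{P}(\mathcal{E})$, the projective bundle of rank-one locally free quotients of $\mathcal{E}$ (a Zariski-locally trivial $\mathbb{P}^{r-1}$-bundle over $T$), to $\mathcal{B}\mathcal{S}(\mathcal{E}nd(\mathcal{E}))$, and I claim it is an isomorphism. Whether it is an isomorphism may be checked Zariski-locally on $T$, reducing to $T=\mathrm{Spec}(R)$, $\mathcal{E}=R^{r}$, where the claim is that a left ideal $I\subseteq M_r(R)$ has $M_r(R)/I$ projective of rank $r(r-1)$ if and only if $I$ is the annihilator of a corank-one direct summand of $R^{r}$ --- and such $I$ form $\mathbb{P}^{r-1}_R$. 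This follows from the Morita equivalence $M_r(R)\text{-Mod}\simeq R\text{-Mod}$: the left module $M_r(R)$ corresponds to a rank-$r$ free $R$-module $M$, a left ideal $I$ to a submodule $N\subseteq M$, and the $R$-module underlying $M_r(R)/I$ (restriction of scalars along $R\hookrightarrow M_r(R)$) is isomorphic to $(M/N)^{\oplus r}$; hence it is projective of rank $r(r-1)$ precisely when $N$ is a rank-one direct summand of $M$.

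It remains to globalize. Choose an \'etale surjection $p\colon U\to X$ with $p^{*}\mathcal{A}\cong M_r(\mathcal{O}_U)$, which exists because $\mathcal{A}$ is Azumaya; by the split case $\mathcal{B}\mathcal{S}(p^{*}\mathcal{A})$ is represented by $\pi_U\colon V:=\mathbb{P}^{r-1}_U\to U$. Via Yoneda and the functoriality of $\mathcal{B}\mathcal{S}$, the canonical descent datum on $p^{*}\mathcal{A}$ relative to $U\times_X U\rightrightarrows U$ induces on $V/U$ a descent datum obeying the cocycle condition. The only genuinely non-formal point is that this datum is effective: the relative $\mathcal{O}_V(1)$ need not descend --- which is precisely why $BS(\mathcal{A})$ need not be a \emph{Zariski} $\mathbb{P}^{r-1}$-bundle --- but the relative dualizing sheaf $\omega_{V/U}$ is canonical and compatible with base change, so the descent datum automatically carries $\omega_{V/U}$, and $\omega_{V/U}^{-1}$ is relatively ample; effective descent for quasi-projective schemes equipped with a relatively ample sheaf bearing a compatible descent datum then produces an $X$-scheme $\pi\colon BS(\mathcal{A})\to X$. (Alternatively, representability by a scheme follows without descent by viewing $\mathcal{B}\mathcal{S}(\mathcal{A})$ as a locally closed subfunctor of the relative Quot scheme of the coherent $\mathcal{O}_X$-module $\mathcal{A}$ --- ``left ideal'' being a closed and ``locally free of rank $r(r-1)$'' an open condition --- after which the bundle structure is read off from the split case.) Since $\mathcal{B}\mathcal{S}(\mathcal{A})$ and the functor of points of $BS(\mathcal{A})$ are \'etale sheaves agreeing after pullback along $p$, the scheme $BS(\mathcal{A})$ represents $\mathcal{B}\mathcal{S}(\mathcal{A})$; and flatness, surjectivity and properness of $\pi$, together with the claim that $\pi$ is an \'etale $\mathbb{P}^{r-1}$-bundle, can all be checked after the faithfully flat base change $p$, over which $\pi$ pulls back to $\mathbb{P}^{r-1}_U\to U$. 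I expect the main obstacle to be exactly this last issue --- effectivity of the \'etale descent of the bundles $V/U$; the rest is either formal or the Morita computation above.
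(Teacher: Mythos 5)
Your sketch is correct in substance, but it is worth pointing out that the paper does not actually prove this lemma at all: its ``proof'' consists of citing Wedhorn (8.13, Exercise 8.14) for representability and Grothendieck's \emph{Le groupe de Brauer} (Th\'eor\`eme 8.2, Corollaire 8.3) for faithful flatness, properness and the \'etale $\mathbb{P}^{r-1}$-bundle structure, so any self-contained argument such as yours necessarily goes beyond what is written. Your split-case computation is the key step and is right: under $M_r(R)\text{-Mod}\simeq R\text{-Mod}$ a left ideal $I$ corresponds to a submodule $N$ of a free rank-$r$ module $M$, the underlying $R$-module of $M_r(R)/I$ is $(M/N)^{\oplus r}$, and projectivity of rank $r(r-1)$ forces $N$ to be a rank-one direct summand, identifying $BS(\mathcal{E}nd(\mathcal{E}))$ with a $\mathbb{P}^{r-1}$-bundle (whether you call it $\mathbb{P}(\mathcal{E})$ or $\mathbb{P}(\mathcal{E}^{\vee})$ is a convention; your identification $N\cong \mathcal{Q}^{\vee}\subseteq\mathcal{E}^{\vee}$ is consistent with the paper's later use of $\mathbb{P}(\mathcal{E}^{\vee})$). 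Of your two globalizations, the second is the cleaner and is essentially the argument behind the cited reference: realize $\mathcal{B}\mathcal{S}(\mathcal{A})$ as the closed subfunctor of the relative Grassmannian of locally free rank-$r(r-1)$ quotients of $\mathcal{A}$ (a projective $X$-scheme) cut out by the left-ideal condition; this yields representability and properness simultaneously, and flatness, surjectivity and \'etale-local triviality then follow by base change to an \'etale cover splitting $\mathcal{A}$, since all of these are fppf-local on the base --- phrased this way you avoid the weaker ``locally closed in Quot'' formulation, from which properness would not be immediate. Your descent route also works, and you correctly isolate effectivity as the only nontrivial point and resolve it with the canonical relatively ample sheaf $\omega_{V/U}^{-1}$ (SGA 1, Exp. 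VIII); the only thing to add there is a comparison map between the descended scheme's functor of points and the \'etale sheaf $\mathcal{B}\mathcal{S}(\mathcal{A})$ compatible with the identification over the cover, so that ``agreeing after pullback'' really upgrades to an isomorphism of sheaves.
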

\begin{proof}
For the first assertion see (\cite[8.13, Exercise 8.14.]{wedh}) and for the second assertion see (\cite[Th\'eor\`eme 8.2, Corollaire 8.3]{groth}).
\end{proof}
\begin{rem}\label{functor}
\normalfont  For every $X$-scheme $f: T\rightarrow X$ we get by functoriality a canonical isomorphism
\begin{equation*}
BS(\mathcal{A}_T)=BS(f^{*}\mathcal{A})\cong BS(\mathcal{A})\times_X T.
\end{equation*}
\end{rem}
The scheme $BS(\mathcal{A})$ is called the Brauer-Severi variety associated to $\mathcal{A}$. In the following we will just write $\pi: Y\rightarrow X$ for this $X$-scheme.\\
There is a canonically defined locally free sheaf $G$ of rank $r$ on $Y$, which is unique up to scalars (\cite[Lemma 1.1.]{yoshi}), which sits in the Euler sequence
\begin{equation*}\begin{CD}
0 @>>> \mathcal{O}_Y @>>>  G @>>> \mathcal{T}_{Y/X} @>>> 0.
\end{CD}\end{equation*}
Using ideas of Yoshioka, see (\cite[Lemma 1.5.]{yoshi}), we define the following subcategory of $Coh(Y)$:
\begin{defi}
Assume $\pi: Y\rightarrow X$ is a Brauer-Severi variety, then we define:
\begin{equation*}
Coh(Y,X):=\{E\in Coh(Y) | \pi^{*}\pi_{*}(E\otimes G^{\vee})\xrightarrow{\hspace{0.2cm}\sim \hspace{0.2cm}}E\otimes G^{\vee}\}.
\end{equation*}
Here the morphism is the canonical morphism coming from the adjunction between $\pi^{*}$ and $\pi_{*}$.
\end{defi}
Furthermore we denote the category of coherent sheaves on $X$ having the structure of a left respectively right $\mathcal{A}$-module by $Coh_l(X,\mathcal{A})$ respectively $Coh_r(X,\mathcal{A})$.
\begin{lem}\label{directim}
Assume $\pi: Y\rightarrow X$ is a Brauer-Severi variety, then for $E\in Coh(X)$ we have
\begin{equation*}
R^i\pi_{*}(\pi^{*}E)\cong \begin{cases} E &\mbox{if } i = 0 \\
0 & \mbox{if } i \geq 1. \end{cases}
\end{equation*}
\end{lem}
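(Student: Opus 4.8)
The geometric input is exactly the content of the preceding lemma: $\pi$ is flat, proper, and an \'etale $\mathbb{P}^{r-1}$-bundle, so all fibres are geometrically connected and reduced and carry the cohomology of $\mathbb{P}^{r-1}$. I would first treat the case $E=\mathcal{O}_X$. Pick an \'etale surjection $g\colon X'\to X$ over which $\pi$ trivialises, so that $p\colon Y':=Y\times_X X'\to X'$ is (locally on $X'$) a projective $\mathbb{P}^{r-1}$-bundle; for such a bundle $p_*\mathcal{O}_{Y'}=\mathcal{O}_{X'}$ and $R^ip_*\mathcal{O}_{Y'}=0$ for $i\geq 1$ are classical. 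Since the formation of the $R^i\pi_*$ commutes with the flat base change $g$, and since the unit $\mathcal{O}_X\to\pi_*\pi^*\mathcal{O}_X=\pi_*\mathcal{O}_Y$ restricts under $g$ to the corresponding unit for $p$, faithfully flat descent along $g$ yields $\pi_*\mathcal{O}_Y=\mathcal{O}_X$ and $R^i\pi_*\mathcal{O}_Y=0$ for $i\geq 1$. (One could equally run cohomology-and-base-change downward starting from degree $r-1$.)

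\textbf{Bootstrapping to arbitrary $E$.} The assertion is local on $X$, so I would fix an affine open $U=\operatorname{Spec}R\subseteq X$ and write $Y_U=\pi^{-1}(U)$. Since $R$ is noetherian, $E|_U$ admits a resolution by finite free $\mathcal{O}_U$-modules; truncating just past the fibre dimension gives an exact sequence
\begin{equation*}
0\longrightarrow K\longrightarrow F_{r-1}\longrightarrow\cdots\longrightarrow F_1\longrightarrow F_0\longrightarrow E|_U\longrightarrow 0
\end{equation*}
with the $F_j$ finite free. Flatness of $\pi$ keeps this exact after applying $\pi^*$, and each $\pi^*F_j$ is a finite direct sum of copies of $\mathcal{O}_{Y_U}$. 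Breaking the sequence into short exact pieces $0\to Z_j\to F_j\to Z_{j-1}\to 0$ (with $Z_{-1}=E|_U$), the long exact sequences for $R\pi_*$, combined with the case $E=\mathcal{O}_X$ ($R^i\pi_*\pi^*F_j=0$ for $i\geq 1$, $\pi_*\pi^*F_j=F_j$) and Grothendieck vanishing $R^i\pi_*=0$ for $i>r-1$, force $R^i\pi_*(\pi^*E)|_U=0$ for $i\geq 1$ by a descending induction; and the $\pi_*$-part of the last two short exact sequences then identifies $\pi_*(\pi^*E)|_U$ with $\operatorname{coker}(F_1\to F_0)=E|_U$ via the adjunction morphism. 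Glueing over an affine cover of $X$ finishes the proof. Alternatively, one may quote the projection formula $R\pi_*(\pi^*E)\simeq E\otimes^{\mathbf L}_{\mathcal{O}_X}R\pi_*\mathcal{O}_Y$ for the proper morphism $\pi$ and plug in $R\pi_*\mathcal{O}_Y=\mathcal{O}_X$, which produces both statements simultaneously.

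\textbf{Where the work is.} Neither step is difficult, but the care is in the case $E=\mathcal{O}_X$: since $Y\to X$ is only \'etale-locally, not Zariski-locally, a projective bundle, one cannot simply reduce to a product, and the clean identification $\pi_*\mathcal{O}_Y\cong\mathcal{O}_X$ (not merely up to a twist) must be extracted by descent or by base change. In the bootstrap step the only point to watch is that $X$ may be singular, so locally one has only possibly infinite free resolutions; this causes no trouble because $R^i\pi_*$ vanishes beyond the fibre dimension, so a length-$r$ truncation already suffices.
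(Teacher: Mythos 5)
Your argument is correct, and its geometric core coincides with the paper's: both reduce to the split case via a faithfully flat \'etale base change over which the Brauer--Severi variety becomes an honest projective bundle, and use flat base change plus faithful flatness to descend the answer. The difference is in how the general coherent $E$ is handled. The paper starts from the natural map $R^i\pi_{*}(\mathcal{O}_Y)\otimes E\rightarrow R^i\pi_{*}(\pi^{*}E)$ (an underived projection-formula map), checks it after an \'etale splitting base change, identifies $Y'\cong \mathbb{P}(\mathcal{E}^{\vee})$ by Morita equivalence, and then simply cites the known projective-bundle computation for pullbacks of arbitrary coherent sheaves; in particular the base change is applied to $\pi^{*}E$ itself. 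You instead use the splitting cover only to compute $R^{\bullet}\pi_{*}\mathcal{O}_Y$, and then bootstrap to arbitrary $E$ on $X$ by a length-$r$ truncated finite free resolution, flatness of $\pi$, descending induction with vanishing of $R^i\pi_{*}$ above the fibre dimension $r-1$, and a five-lemma-type identification of $\pi_{*}\pi^{*}E$ with $E$ via the unit; this makes the general-$E$ step self-contained (no citation needed beyond $p_{*}\mathcal{O}=\mathcal{O}$, $R^{i>0}p_{*}\mathcal{O}=0$ for a projective bundle), at the modest cost of invoking fibre-dimension vanishing for proper morphisms. Your alternative via the derived projection formula $R\pi_{*}(\pi^{*}E)\simeq E\otimes^{\mathbf{L}}R\pi_{*}\mathcal{O}_Y$ is essentially the paper's opening natural map in derived form, so that route is the closest to the published proof. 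No gaps.
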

\begin{proof}
For every $i\geq 0$ there is a natural morphism $R^i\pi_{*}(\mathcal{O}_Y)\otimes E\rightarrow R^i\pi_{*}(\pi^{*}E)$. Since $Y$ is an \'etale $\mathbb{P}^{r-1}$-bundle over $X$ we have
\begin{equation*}
R^i\pi_{*}\mathcal{O}_Y\cong \begin{cases} \mathcal{O}_X &\mbox{if } i = 0 \\
0 & \mbox{if } i \geq 1. \end{cases}
\end{equation*}
It is therefore enough to prove this after a faithfully flat \'etale base change $f: X'\rightarrow X$ that splits $\mathcal{A}$.
\begin{equation*}\begin{CD}
Y' @>{f'}>> Y\\
@V{\pi'}VV @VV{\pi}V\\
X' @>{f}>> X
\end{CD}\end{equation*}
So we have $f^{*}\mathcal{A}\cong\mathcal{E}nd_{\mathcal{O}_{X'}}(\mathcal{E})$ for a locally free sheaf $\mathcal{E}$ on $X'$. But then by (\ref{functor})
\begin{equation*}
Y'=Y\times_X X'\cong BS(f^{*}\mathcal{A})
\end{equation*}
and using Morita equivalence we see that $BS(f^{*}\mathcal{A})=BS(\mathcal{E}nd_{\mathcal{O}_{X'}}(\mathcal{E}))\cong \mathbb{P}(\mathcal{E}^{\vee})$. Now the result follows from (\cite[4.5.(f)]{thoma}).
\end{proof}
\begin{rem}\label{asplit}
\normalfont The Brauer-Severi variety $\pi: Y\rightarrow X$ associated to an Azumaya algebra $\mathcal{A}$ has the following splitting property: there is an isomorphism of $\mathcal{O}_Y$-algebras on $Y$
\begin{equation*}
\pi^{*}\mathcal{A}\cong \mathcal{E}nd_{\mathcal{O}_Y}(G)^{op},
\end{equation*}
see (\cite[8.4.]{quillen}). This shows that we have $\pi^{*}(\mathcal{A}^{op})\cong \mathcal{E}nd_{\mathcal{O}_Y}(G)$ and hence  
\begin{equation*}
\mathcal{A}^{op}\cong \pi_{*}\pi^{*}(\mathcal{A}^{op})\cong \pi_{*}(\mathcal{E}nd_{\mathcal{O}_Y}(G)).
\end{equation*}
\end{rem}
\begin{rem}
\normalfont Using Morita equivalence one can see that there is an equivalence of categories
\begin{equation*}
Coh_r(Y,\mathcal{E}nd_{\mathcal{O}_Y}(G))\cong Coh(Y).
\end{equation*}
The equivalence is given by $E\mapsto E\otimes_{\mathcal{E}nd_{\mathcal{O}_Y}(G)}G$ with inverse $F\mapsto F\otimes G^{\vee}$, see for example (\cite[8.12.]{wedh}). 
\end{rem}
\begin{lem}\label{equiv}
Assume $\mathcal{A}$ is an Azumaya algebra on $X$ and let $\pi: Y\rightarrow X$ be the associated Brauer-Severi variety. Then there is an equivalence of categories:
\begin{equation*}
Coh_l(X,\mathcal{A})\cong Coh(Y,X).
\end{equation*}
\end{lem}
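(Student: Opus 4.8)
The plan is to exhibit explicit mutually quasi-inverse functors built from the splitting $\pi^{*}\mathcal{A}\cong\mathcal{E}nd_{\mathcal{O}_Y}(G)^{op}$ of Remark~\ref{asplit}, the Morita equivalence $Coh_r(Y,\mathcal{E}nd_{\mathcal{O}_Y}(G))\cong Coh(Y)$, and Lemma~\ref{directim}. First I would define $\Phi\colon Coh_l(X,\mathcal{A})\to Coh(Y)$ by
\[
\Phi(M)=\pi^{*}M\otimes_{\mathcal{E}nd_{\mathcal{O}_Y}(G)}G,
\]
where $\pi^{*}M$ is regarded as a right $\mathcal{E}nd_{\mathcal{O}_Y}(G)$-module via the isomorphism $\pi^{*}\mathcal{A}\cong\mathcal{E}nd_{\mathcal{O}_Y}(G)^{op}$ and $G$ carries its tautological left $\mathcal{E}nd_{\mathcal{O}_Y}(G)$-module structure. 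In the other direction I would set
\[
\Psi(E)=\pi_{*}\big(E\otimes_{\mathcal{O}_Y}G^{\vee}\big),
\]
noting that $E\otimes_{\mathcal{O}_Y}G^{\vee}$ is naturally a right $\mathcal{E}nd_{\mathcal{O}_Y}(G)$-module, i.e.\ a left $\mathcal{E}nd_{\mathcal{O}_Y}(G)^{op}$-module, so $\Psi(E)$ is a module over $\pi_{*}\big(\mathcal{E}nd_{\mathcal{O}_Y}(G)^{op}\big)=\big(\pi_{*}\mathcal{E}nd_{\mathcal{O}_Y}(G)\big)^{op}\cong\mathcal{A}$.

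Here one should observe that this last identification is one of sheaves of algebras: the adjunction unit $\mathcal{A}\to\pi_{*}\pi^{*}\mathcal{A}$ is a morphism of $\mathcal{O}_X$-algebras which, by Lemma~\ref{directim} applied to the underlying $\mathcal{O}_X$-module of $\mathcal{A}$, is an isomorphism of $\mathcal{O}_X$-modules, hence an isomorphism of algebras; combined with Remark~\ref{asplit} this makes $\Psi(E)$ a genuine left $\mathcal{A}$-module and $\Psi$ a functor $Coh(Y)\to Coh_l(X,\mathcal{A})$. The heart of the proof is then two computations using the Morita identities $G\otimes_{\mathcal{O}_Y}G^{\vee}\cong\mathcal{E}nd_{\mathcal{O}_Y}(G)$ and $G^{\vee}\otimes_{\mathcal{E}nd_{\mathcal{O}_Y}(G)}G\cong\mathcal{O}_Y$. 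For any $M\in Coh_l(X,\mathcal{A})$ one gets $\Phi(M)\otimes_{\mathcal{O}_Y}G^{\vee}\cong\pi^{*}M\otimes_{\mathcal{E}nd_{\mathcal{O}_Y}(G)}\mathcal{E}nd_{\mathcal{O}_Y}(G)=\pi^{*}M$; hence $\Psi(\Phi(M))\cong\pi_{*}\pi^{*}M\cong M$ by Lemma~\ref{directim}, naturally in $M$, and moreover $\pi^{*}\pi_{*}\big(\Phi(M)\otimes G^{\vee}\big)\cong\pi^{*}\pi_{*}\pi^{*}M\cong\pi^{*}M$ with the canonical adjunction morphism an isomorphism (by the triangle identity together with Lemma~\ref{directim}), so $\Phi$ in fact lands in the subcategory $Coh(Y,X)$. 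Conversely, for $E\in Coh(Y,X)$ the defining isomorphism $\pi^{*}\pi_{*}(E\otimes G^{\vee})\xrightarrow{\sim}E\otimes G^{\vee}$ gives
\[
\Phi(\Psi(E))=\pi^{*}\pi_{*}\big(E\otimes_{\mathcal{O}_Y}G^{\vee}\big)\otimes_{\mathcal{E}nd_{\mathcal{O}_Y}(G)}G\cong\big(E\otimes_{\mathcal{O}_Y}G^{\vee}\big)\otimes_{\mathcal{E}nd_{\mathcal{O}_Y}(G)}G\cong E,
\]
again naturally. Thus $\Phi$ and $\Psi$ restrict to mutually quasi-inverse equivalences $Coh_l(X,\mathcal{A})\cong Coh(Y,X)$.

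I expect the real work to be bookkeeping rather than any single deep step: one must be scrupulous about left- versus right-module conventions and about which of $G$, $G^{\vee}$ appears where, and one must check that the two Morita isomorphisms above are the ones compatible with the counit of the $(\pi^{*},\pi_{*})$-adjunction used to cut out $Coh(Y,X)$, so that the displayed isomorphisms are natural and the triangle identities hold on the nose. The only genuinely non-formal ingredient is Lemma~\ref{directim}, i.e.\ the fact that $\pi_{*}\pi^{*}\cong\mathrm{id}$ on $Coh(X)$; this is precisely what recovers $X$ from $Y$ and identifies $Coh(Y,X)$ with the essential image of $\Phi$.
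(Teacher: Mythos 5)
Your proposal is correct and follows essentially the same route as the paper: the same two functors $E\mapsto\pi^{*}E\otimes_{\mathcal{E}nd_{\mathcal{O}_Y}(G)}G$ (the paper writes this via $Coh_r(X,\mathcal{A}^{op})$ and $\pi^{*}(\mathcal{A}^{op})\cong\mathcal{E}nd_{\mathcal{O}_Y}(G)$) and $E\mapsto\pi_{*}(E\otimes G^{\vee})$, with the same inputs — Remark~\ref{asplit}, the Morita identities $G\otimes G^{\vee}\cong\mathcal{E}nd_{\mathcal{O}_Y}(G)$ and $G^{\vee}\otimes_{\mathcal{E}nd_{\mathcal{O}_Y}(G)}G\cong\mathcal{O}_Y$, and Lemma~\ref{directim} — used in the same way to check well-definedness and the two composite isomorphisms.
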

\begin{proof}
The category $Coh_l(X,\mathcal{A})$ is isomorphic to $Coh_r(X,\mathcal{A}^{op})$, hence it is enough to show that $Coh_r(X,\mathcal{A}^{op})$ and $Coh(Y,X)$ are equivalent. For this we define the following functors:
\begin{alignat*}{2}
F&: Coh_r(X,\mathcal{A}^{op})\rightarrow Coh(Y,X), E&&\mapsto \pi^{*}E\otimes_{\pi^{*}(\mathcal{A}^{op})}G\\
H&: Coh(Y,X)\rightarrow Coh_r(X,\mathcal{A}^{op}), E&&\mapsto \pi_{*}(E\otimes G^{\vee})
\end{alignat*}
First of all we need to verify that $F$ and $H$ are well-defined. That is we need to see that
\begin{enumerate}[i)]
\item $\pi^{*}E\otimes_{\pi^{*}(\mathcal{A}^{op})}G \in Coh(Y,X)$ 
\item $\pi_{*}(E\otimes G^{\vee})\in Coh_r(X,\mathcal{A}^{op})$.
\end{enumerate}
For i) we look at the canonical morphism
\begin{equation*}
\pi^{*}\pi_{*}((\pi^{*}E\otimes_{\pi^{*}(\mathcal{A}^{op})}G)\otimes G^{\vee})\rightarrow (\pi^{*}E\otimes_{\pi^{*}(\mathcal{A}^{op})}G)\otimes G^{\vee}.
\end{equation*}
But by definition we have $G\otimes G^{\vee}\cong \mathcal{E}nd_{\mathcal{O}_Y}(G)\cong\pi^{*}(\mathcal{A}^{op})$. So we have to see that $\pi^{*}\pi_{*}\pi^{*}E\rightarrow \pi^{*}E$ is an isomorphism. But this follows since $E\rightarrow \pi_{*}\pi^{*}E$ is an isomorphism by (\ref{directim}) and $(\pi^{*},\pi_{*})$ is a pair of adjoint functors.\\
For ii) we just note that $G^{\vee}$ is a right $\mathcal{E}nd_{\mathcal{O}_Y}(G)$-module, hence so is $E\otimes G^{\vee}$. Then $\pi_{*}(E\otimes G^{\vee})$ is a right $\pi_{*}(\mathcal{E}nd_{\mathcal{O}_Y}(G))\cong \mathcal{A}^{op}$-module by (\ref{asplit}).\\
Now we study $F\circ H$ and $H\circ F$: for $E\in Coh_r(X,\mathcal{A}^{op})$ we have:
\begin{equation*}
H(F(E))=\pi_{*}((\pi^{*}E\otimes_{\pi^{*}(\mathcal{A}^{op})}G)\otimes G^{\vee})\cong \pi_{*}\pi^{*}E\cong E
\end{equation*} 
again by (\ref{directim}). Now for $E\in Coh(Y,X)$ we get
\begin{equation*}
F(H(E))=\pi^{*}(\pi_{*}(E\otimes G^{\vee}))\otimes_{\pi^{*}(\mathcal{A}^{op})}G\cong (E\otimes G^{\vee})\otimes_{\pi^{*}(\mathcal{A}^{op})}G\cong E
\end{equation*}
The first isomorphism follows from $E\in Coh(Y,X)$ and the second isomorphism follows from $G^{\vee}\otimes_{\pi^{*}(\mathcal{A}^{op})}G\cong \mathcal{O}_Y$, see (\cite[Proposition 8.26.]{wedh}).\\
This shows that these two categories are equivalent and therefore $Coh_l(X,\mathcal{A})$ and $Coh(Y,X)$ are also equivalent.
\end{proof}

\section{Moduli spaces of line bundles on a noncommutative Azumaya surface}
In this section we work again with a fixed scheme $X$, which in this case should be a smooth projective $K3$-surface over $\mathbb{C}$. Thus we have the Mukai pairing $<-,->$ on $H^{2*}(X,\mathbb{Q})$ given by 
\begin{equation*}
<x,y>=-\int_X x^{\vee}y\hspace{0.7cm} x,y\in H^{2*}(X,\mathbb{Q}),
\end{equation*}
see for example (\cite[2.6.1.5]{lehn}). To shorten notation we write $x^2$ for the term $<x,x>$.\\
We want to study moduli spaces of sheaves on $X$, so we need to understand families of sheaves. For this we pick an Azumaya algebra $\mathcal{A}$ of rank $r^2$ on $X$ and let $\pi: Y\rightarrow X$ be the associated Brauer-Severi variety.\\ 
For every $S\in Sch_{\mathbb{C}}$ and for every $s\in S$ we have the double pullback diagram: 
\begin{equation*}\begin{CD}
Y_s @>{j_s}>> Y\times S @>{q}>> Y\\
@V{\pi_s}VV @V{\pi_S}VV @VV{\pi}V\\
X_s @>{i_s}>> X\times S @>{p}>> X
\end{CD}\end{equation*}
where we have $Y\times S\cong BS(\mathcal{A}_S)$ and $Y_s\cong BS(\mathcal{A}_s)$ by (\ref{functor}) so that both schemes are also Brauer-Severi varieties.\\
The following objects live on the various schemes in this diagram: 
\begin{itemize}
\item $\mathcal{A}$ on $X$, $\mathcal{A}_S:=p^{*}\mathcal{A}$ on $X\times S$ and $\mathcal{A}_s:=i_s^{*}p^{*}\mathcal{A}$ on $X_s$.
\item $\pi^{*}(\mathcal{A}^{op})\cong\mathcal{E}nd_{\mathcal{O}_{Y}}(G)$ on $Y$, $\pi_S^{*}(\mathcal{A}_S^{op})\cong\mathcal{E}nd_{\mathcal{O}_{Y\times S}}(G_S)$ on $Y\times S$ with $G_S=q^{*}G$ and $\pi_s^{*}(\mathcal{A}_s^{op})\cong\mathcal{E}nd_{\mathcal{O}_{Y_s}}(G_s)$ on $Y_s$ with $G_s=j_s^{*}q^{*}G$.
\end{itemize}
\begin{rem}
\normalfont At the generic point $\eta\in X$ the Azumaya algebra $\mathcal{A}$ is given by the central simple algebra $\mathcal{A}_\eta=M_n(D)$ for some division ring $D$ over the function field $\mathbb{C}(X)$. Without loss of generality we may assume $n=1$. This is because we are only interested in the Brauer class $[\mathcal{A}]\in Br(X)$. (Brauer equivalent algebras have equivalent module categories and hence isomorphic moduli spaces.) But we have $[M_n(D)]=[D]\in Br(\mathbb{C}(X))$ so by injectivity of $Br(X)\rightarrow Br(\mathbb{C}(X))$, $[\mathcal{A}]\mapsto [\mathcal{A}_\eta]$ all we need to do is find an Azumaya algebra $\mathcal{D}$ on $X$ with $\mathcal{D}_\eta=D$. The existence of such an algebra is due the following theorem:
\end{rem}
\begin{thm}
Assume $X$ is a regular integral scheme of dimension at most two with function field $K$. If $A$ is a central simple $K$-algebra of dimension $n^2$ whose image in $_nBr(K)$ is unramified at every point of codimension one in $X$, then there is an Azumaya algebra $\mathcal{A}$ of rank $n^2$ on $X$ such that $\mathcal{A}\otimes K=A$.  
\end{thm}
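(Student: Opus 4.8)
The plan is to realize $A$ directly by a sheaf of orders, the crucial local input being the theorem of Auslander--Goldman that over a discrete valuation ring a central simple algebra whose class is unramified is represented by its maximal order, which is then Azumaya. Concretely, I would first choose any coherent $\mathcal{O}_X$-order $\mathcal{A}_0\subset A$, i.e. a coherent sheaf of $\mathcal{O}_X$-subalgebras of the generic algebra $A$ with $\mathcal{A}_0\otimes_{\mathcal{O}_X}K=A$; such an order exists since $X$ is Noetherian and integral. The locus on which $\mathcal{A}_0$ is Azumaya --- locally free of the generic rank, with the canonical map $\mathcal{A}_0\otimes\mathcal{A}_0^{op}\to\operatorname{End}(\mathcal{A}_0)$ an isomorphism --- is open and contains the generic point, so its complement is a proper closed subset whose codimension-one part is a finite set of points $x_1,\dots,x_m$, each with $\mathcal{O}_{X,x_i}$ a discrete valuation ring. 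At a height-one point $x\notin\{x_1,\dots,x_m\}$ the stalk $(\mathcal{A}_0)_x$ is already Azumaya over the discrete valuation ring $\mathcal{O}_{X,x}$, hence a maximal order in $A$; at each $x_i$ I would instead take a maximal $\mathcal{O}_{X,x_i}$-order $\Lambda_{x_i}\subset A$, which by the unramifiedness hypothesis and Auslander--Goldman is Azumaya over $\mathcal{O}_{X,x_i}$. These local lattices in $A$ coincide with $(\mathcal{A}_0)_x$ for all but finitely many height-one $x$, so over the normal scheme $X$ they glue to a reflexive coherent sheaf of $\mathcal{O}_X$-algebras $\mathcal{A}_1\subset A$ with $\mathcal{A}_1\otimes_{\mathcal{O}_X}K=A$ whose stalk at every point of codimension at most one is Azumaya. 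Hence $\mathcal{A}_1$ is Azumaya on an open $U\subseteq X$ with $X\setminus U$ of codimension at least two.

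It remains to extend across $X\setminus U$, and this is where both $\dim X\le 2$ and the regularity of $X$ are genuinely used. If $\dim X\le 1$ then $X\setminus U=\emptyset$ and we are done with $\mathcal{A}:=\mathcal{A}_1$. In the surface case $X\setminus U$ is a finite set of closed points; fixing one, write $R=\mathcal{O}_{X,p}$, a two-dimensional regular local ring with fraction field $K$, and $M=(\mathcal{A}_1)_p$. Then $M$ is reflexive over $R$, hence satisfies Serre's condition $S_2$, hence has depth $2$, hence is free over $R$ by the Auslander--Buchsbaum formula, of rank $n^2$. The canonical map $\phi\colon M\otimes_R M^{op}\to\operatorname{End}_R(M)$ of free $R$-modules of rank $n^4$ is an isomorphism over $K$ and over $\operatorname{Spec}R\setminus\{\mathfrak{m}\}$ (since $\mathcal{A}_1$ is Azumaya on $U$), so $\det\phi\in R$ is nonzero with vanishing locus contained in $\{\mathfrak{m}\}$; since a nonzero non-unit would cut out a pure codimension-one locus in $\operatorname{Spec}R$ by Krull's principal ideal theorem, $\det\phi$ must be a unit, $\phi$ is an isomorphism, and $M$ is Azumaya over $R$. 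Thus $\mathcal{A}:=\mathcal{A}_1$ is an Azumaya algebra on $X$ of rank $n^2$ with $\mathcal{A}\otimes_{\mathcal{O}_X}K=A$, as required.

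I expect this last extension step to be the only real difficulty: it is the single place where regularity intervenes (through $S_2$ plus Auslander--Buchsbaum, to force local freeness at the codimension-two points) and where the dimension bound intervenes (through the Hauptidealsatz in the determinant argument); everything before it is formal manipulation of orders once the Auslander--Goldman statement over discrete valuation rings is granted. One could instead argue cohomologically --- the hypothesis puts $[A]$ in $\bigcap_x\operatorname{im}\bigl(Br(\mathcal{O}_{X,x})\to Br(K)\bigr)$ over codimension-one $x$, purity of the Brauer group in dimension at most two then yields a class in the cohomological Brauer group $Br'(X)$ restricting to $[A]$, and $Br'(X)=Br(X)$ for the quasi-projective surface $X$ realizes it by an Azumaya algebra --- but one would still need to correct the generic fibre to be $A$ on the nose rather than merely Brauer-equivalent, so the order-theoretic argument above seems cleaner.
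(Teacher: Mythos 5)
Your argument is sound, but be aware that the paper does not actually prove this statement: its ``proof'' is the single citation of Th\'eor\`eme 2.5 in \cite{col}, and what you have written out is essentially the classical Auslander--Goldman purity argument that underlies that reference. So you have not diverged from the paper so much as supplied the proof it outsources: Azumaya in codimension one via the fact that over a discrete valuation ring a maximal order in an unramified central simple algebra is Azumaya, then extension over the codimension-two locus via reflexivity, depth plus Auslander--Buchsbaum to get local freeness, and the Hauptidealsatz applied to $\det\phi$. Two remarks on your write-up. First, the gluing step is the one place you lean on an unproved (though standard) lemma, namely that local lattices prescribed at height-one points and agreeing with a fixed coherent lattice at all but finitely many of them intersect to a coherent reflexive sheaf of algebras; you can sidestep it, as Auslander--Goldman and Colliot-Th\'el\`ene do, by observing that since $X$ is normal and $A$ is a separable $K$-algebra, $\mathcal{A}_0$ is contained in a maximal $\mathcal{O}_X$-order $\mathcal{A}_1$, which is automatically reflexive and whose stalks at codimension-one points are maximal orders over the corresponding discrete valuation rings, so your DVR input applies verbatim. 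Second, the codimension-one step needs ``unramified'' to mean that $[A]$ lies in the image of $Br(\mathcal{O}_{X,x})\rightarrow Br(K)$, which is the form in which the Auslander--Goldman equivalence (maximal order Azumaya if and only if the class extends over the DVR) is available; if one instead defines unramifiedness by vanishing of cohomological residues, the comparison of the two notions should at least be acknowledged. Also, in the last step you do not need $X\setminus U$ to consist of closed points or to be finite: the argument is local at each point of codimension two, and only the two-dimensional regular local ring there matters. With these caveats your proof is complete and, unlike the paper, self-contained.
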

\begin{proof}
See (\cite[Th\'eor\`eme 2.5.]{col}).	
\end{proof}
Using this we see that a generically simple $\mathcal{A}$-module $E$, i.e. $E_\eta$ is a simple $\mathcal{A}_\eta$-module, must be generically of rank one over $\mathcal{A}$, hence has rank $r^2$ over $X$. We call such modules $\mathcal{A}$-modules of rank one. Now we can study the moduli functors of interest.\\
First we define
\begin{equation*}
\mathcal{M}_{\mathcal{A}/X}(v_\mathcal{A}): (Sch_{\mathbb{C}})^{op}\rightarrow Sets
\end{equation*}
which sends a $\mathbb{C}$-scheme $S$ to the set of isomorphism classes of families of torsion-free $\mathcal{A}$-modules of rank one with Mukai vector $v_\mathcal{A}$ over $S$.\\
The following facts are known:
\begin{thm}
The moduli functor $\mathcal{M}_{\mathcal{A}/X}(v_\mathcal{A})$ has a coarse moduli scheme $M_{\mathcal{A}/X}(v_\mathcal{A})$. Furthermore $M_{\mathcal{A}/X}(v_\mathcal{A})$ is a smooth projective scheme with a symplectic form on its tangent bundle.
\end{thm}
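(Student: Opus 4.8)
The plan is to realise $\mathcal{M}_{\mathcal{A}/X}(v_{\mathcal{A}})$ as a moduli space of suitably conditioned sheaves that can be handled by Simpson-type GIT, and then to run the standard $K3$ deformation machinery in the $\mathcal{A}$-linear setting. First I would use the exact equivalence of Lemma~\ref{equiv}, $E\mapsto\pi^{*}E\otimes_{\pi^{*}(\mathcal{A}^{op})}G$, to pass from torsion-free $\mathcal{A}$-modules on $X$ to coherent sheaves on the smooth projective variety $Y$ lying in the subcategory $Coh(Y,X)$. Since this equivalence and its relative versions built from the double-pullback diagram are $\mathbb{C}$-linear, exact and compatible with base change, it identifies $\mathcal{M}_{\mathcal{A}/X}(v_{\mathcal{A}})$ with a moduli functor of coherent sheaves on $Y$ with a fixed Hilbert polynomial; in particular, fixing $v_{\mathcal{A}}$ bounds the family. (Equivalently one may argue directly on $X$, using that \'etale-locally $\mathcal{A}$ is a matrix algebra and Morita equivalence turns $\mathcal{A}$-modules into ordinary sheaves, so that the usual boundedness results transport.)

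For the coarse moduli scheme, note that a generically simple torsion-free $\mathcal{A}$-module $E$ has $\mathrm{End}_{\mathcal{A}}(E)\subseteq\mathrm{End}_{\mathcal{A}_{\eta}}(E_{\eta})$, a finite-dimensional $\mathbb{C}$-subalgebra of a division ring, hence a finite-dimensional division $\mathbb{C}$-algebra, hence $\mathbb{C}$ itself; so $E$ is simple and automatically stable, and there are no strictly semistable points. Simpson's GIT construction applied to the relevant Quot scheme of $\mathcal{A}$-module quotients (as in \cite{hst}, or to its image under the first step) then yields a coarse, indeed geometric, quasi-projective moduli scheme $M_{\mathcal{A}/X}(v_{\mathcal{A}})$. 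Projectivity follows from the valuative criterion together with a semistable-reduction argument in the style of Langton for $\mathcal{A}$-modules — a flat family over a punctured trait extends, with the same $v_{\mathcal{A}}$, to a family of torsion-free generically simple $\mathcal{A}$-modules over the trait — while separatedness is immediate from $\mathrm{End}_{\mathcal{A}}(E)=\mathbb{C}$.

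Smoothness comes from deformation theory: the tangent space of $M_{\mathcal{A}/X}(v_{\mathcal{A}})$ at $[E]$ is $\mathrm{Ext}^{1}_{\mathcal{A}}(E,E)$ and obstructions lie in $\mathrm{Ext}^{2}_{\mathcal{A}}(E,E)$. Because $X$ is a surface and $E$ is torsion-free, $E$ has $\mathcal{A}$-homological dimension at most one \'etale-locally, so $\mathbb{R}\mathcal{H}om_{\mathcal{A}}(E,E)$ is a perfect complex of $\mathcal{O}_{X}$-modules; combined with Serre duality on the $K3$ surface, where $\omega_{X}\cong\mathcal{O}_{X}$, this gives Serre duality for $\mathcal{A}$-modules, and in particular $\mathrm{Ext}^{2}_{\mathcal{A}}(E,E)\cong\mathrm{Hom}_{\mathcal{A}}(E,E)^{\vee}=\mathbb{C}$, spanned by the dual of the identity. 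Since the trace map $\mathrm{Ext}^{2}_{\mathcal{A}}(E,E)\to H^{2}(X,\mathcal{O}_{X})=\mathbb{C}$ is then an isomorphism and every obstruction has trace zero, all obstructions vanish; hence $\mathcal{M}_{\mathcal{A}/X}(v_{\mathcal{A}})$ is unobstructed and $M_{\mathcal{A}/X}(v_{\mathcal{A}})$ is smooth, of dimension $\dim\mathrm{Ext}^{1}_{\mathcal{A}}(E,E)=v_{\mathcal{A}}^{2}+2$ by Riemann--Roch for $\mathcal{A}$-modules.

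Finally, Yoneda composition followed by the trace gives an alternating pairing
\begin{equation*}
\mathrm{Ext}^{1}_{\mathcal{A}}(E,E)\times\mathrm{Ext}^{1}_{\mathcal{A}}(E,E)\longrightarrow\mathrm{Ext}^{2}_{\mathcal{A}}(E,E)\xrightarrow{\ \mathrm{tr}\ }\mathbb{C},
\end{equation*}
which is non-degenerate since under Serre duality it is identified with the perfect duality pairing on $\mathrm{Ext}^{1}_{\mathcal{A}}(E,E)$. Carrying this out fibrewise with a local universal family, whose relative first $\mathcal{E}xt$-sheaf is the tangent bundle of $M_{\mathcal{A}/X}(v_{\mathcal{A}})$, produces a global, everywhere non-degenerate $2$-form $\sigma$ on $M_{\mathcal{A}/X}(v_{\mathcal{A}})$. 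I expect the main obstacle to be concentrated in these last two steps: making Serre duality $\mathrm{Ext}^{i}_{\mathcal{A}}(E,F)\cong\mathrm{Ext}^{2-i}_{\mathcal{A}}(F,E\otimes\omega_{X})^{\vee}$ precise for torsion-free — not locally projective — $\mathcal{A}$-modules, checking that the trace splits off the identity so that obstructions genuinely vanish, and verifying closedness of $\sigma$ by the argument going back to Mukai (which applies once the good obstruction theory of the previous step is in place). Everything else — boundedness, the GIT quotient, projectivity — is then routine.
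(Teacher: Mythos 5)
The paper offers no independent argument here: its ``proof'' is the citation to \cite[Theorem 2.4, Theorem 3.6]{hst}, and your sketch is in substance a reconstruction of exactly that cited argument --- $\mathrm{End}_{\mathcal{A}}(E)\subseteq \mathrm{End}_{\mathcal{A}_\eta}(E_\eta)$ forcing $\mathrm{End}_{\mathcal{A}}(E)=\mathbb{C}$ and automatic stability of generically simple torsion-free modules, GIT on a Quot scheme of $\mathcal{A}$-module quotients, a Langton-type extension theorem for properness, unobstructedness via the trace map and Serre duality on the $K3$, and a Mukai-style symplectic form from the Yoneda pairing composed with the trace. So you are following essentially the same route as the source the paper relies on, and the points you flag as the real work (Serre duality $\mathrm{Ext}^2_{\mathcal{A}}(E,E)\cong\mathrm{Hom}_{\mathcal{A}}(E,E)^{\vee}$ for torsion-free, non-locally-projective modules; Langton for $\mathcal{A}$-modules; closedness of the $2$-form) are precisely the technical content of \cite{hst}. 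The one place where you deviate is your opening reduction through the equivalence of Lemma~\ref{equiv} to sheaves in $Coh(Y,X)$ on the Brauer--Severi variety: this is not needed for the present theorem and is not how \cite{hst} proceed (they work directly with $\mathcal{A}$-modules on $X$, as your parenthetical alternative does). If you do insist on working on $Y$, the relevant stability is not plain Simpson stability but Yoshioka's $G$-twisted stability from \cite{yoshi}, i.e.\ exactly the machinery the paper deliberately reserves for the later identification of $M_{\mathcal{A}/X}(v)$ with $M^{Y,G}_H(v)$ and the deformation-equivalence statement; mixing it into the construction step would make you prove Lemma~\ref{famil} and the functor isomorphism before you have either moduli space, so the cleaner logical order is the paper's: quote (or prove) the $\mathcal{A}$-linear construction on $X$ first, then compare with $Y$.
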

\begin{proof}
See (\cite[Theorem 2.4., Theorem 3.6.]{hst}).
\end{proof}
The other moduli functor of interest is the following:
\begin{equation*}
\mathcal{M}^{Y,G}_{H}(v_G): (Sch_{\mathbb{C}})^{op}\rightarrow Sets
\end{equation*}
which maps a $\mathbb{C}$-scheme $S$ to the set of isomorphism classes of families of torsion-free $G$-twisted semistable $\mathcal{O}_Y$-modules of rank $r$ with Mukai vector $v_G$ over $S$.\\
This moduli functor has also been well studied:
\begin{thm}\label{yosh}
The moduli functor $\mathcal{M}^{Y,G}_{H}(v_G)$ has a coarse moduli scheme $M^{Y,G}_{H}(v_G)$. If $v_G$ is primitive and $H$ is general with respect to $v_G$, then all $G$-twisted semistable sheaves are $G$-twisted stable. In particular $M^{Y,G}_{H}(v_G)$ is a smooth projective scheme with a symplectic form on its tangent bundle.
\end{thm}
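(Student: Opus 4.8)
The plan is to transport the problem, via the Brauer--Severi variety $\pi\colon Y\to X$, to Mukai's theory of moduli of simple sheaves on a surface with trivial canonical bundle, following the argument of Yoshioka in \cite{yoshi}. Concretely, I would first construct $M^{Y,G}_{H}(v_G)$ as a Geometric Invariant Theory quotient in the style of Simpson and Gieseker--Maruyama, with Gieseker stability replaced by $G$-twisted stability. The inputs are: boundedness of the family of torsion-free $G$-twisted semistable $\mathcal{O}_Y$-modules of rank $r$ with Mukai vector $v_G$ (which holds because $G$-twisted semistability bounds the Hilbert polynomials of all subsheaves, so the standard boundedness criterion applies); a twist by a sufficiently ample $H^{\otimes N}$ so that all such sheaves become globally generated and embed into a single Quot scheme; and the verification that GIT (semi)stability for the induced $\mathrm{PGL}$-action matches $G$-twisted (semi)stability. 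Properness follows from the valuative criterion by a Langton-type argument (a $G$-twisted semistable sheaf over the punctured spectrum of a discrete valuation ring can be modified so as to extend across the closed point while remaining $G$-twisted semistable), and projectivity because the quotient carries an ample line bundle descended from the linearization.

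\textbf{Primitive $v_G$ and general $H$.} Next I would run a walls-and-chambers argument in $\mathrm{NS}(X)_{\mathbb{R}}$. A strictly $G$-twisted semistable sheaf with Mukai vector $v_G$ has a Jordan--H\"older filtration with $G$-twisted stable graded pieces of Mukai vectors $v_1,\dots,v_k$, with $\sum_i v_i = v_G$ and all reduced Hilbert polynomials with respect to $H$ equal; this last condition forces $H$ onto one of the locally finitely many ``walls'' cut out by the finitely many possible $v_i$. Hence for $v_G$ primitive and $H$ general there are no strictly semistable sheaves, the $\mathrm{PGL}$-action on the stable locus is free (a stable sheaf has only scalar automorphisms), and $M^{Y,G}_{H}(v_G)$ is a geometric quotient.

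\textbf{Smoothness and the symplectic form.} This step uses that $X$ is a $K3$-surface and is the crux. By Lemma~\ref{equiv}, a $G$-twisted sheaf of the type occurring here lies in $Coh(Y,X)$ and corresponds to a torsion-free $\mathcal{A}$-module of rank one on $X$, equivalently to an $\alpha$-twisted sheaf on $X$, where $\alpha=[\mathcal{A}]\in\mathrm{Br}(X)$; moreover, by Lemma~\ref{directim} and the projection formula, pushing such sheaves down along $\pi$ introduces no higher direct images, so $\mathrm{Ext}$-computations on $Y$ translate into twisted $\mathrm{Ext}$-computations on $X$. The deformation theory of such an $E$ is governed by $\mathrm{Ext}^{i}_{X}(E,E)$: the tangent space at $[E]$ is $\mathrm{Ext}^{1}(E,E)$, and obstructions lie in $\mathrm{Ext}^{2}(E,E)$. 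Since $K_{X}=\mathcal{O}_{X}$, Serre duality for twisted sheaves gives $\mathrm{Ext}^{2}(E,E)\cong\mathrm{Hom}(E,E)^{\vee}$, which is $1$-dimensional for $G$-twisted stable $E$; the trace map $\mathrm{Ext}^{2}(E,E)\to H^{2}(X,\mathcal{O}_{X})\cong\mathbb{C}$ is therefore an isomorphism, all trace-free obstructions vanish, and $M^{Y,G}_{H}(v_G)$ is smooth of the expected dimension $v_G^{2}+2$. The symplectic form is the Mukai pairing $\omega([E])(a,b)=\mathrm{tr}(a\circ b)\in H^{2}(X,\mathcal{O}_{X})\cong\mathbb{C}$ for $a,b\in\mathrm{Ext}^{1}(E,E)$, non-degenerate by Serre duality; its closedness is shown as in Mukai's original argument (or its twisted analogue), e.g.\ by realizing $\omega$ through the Atiyah class as the pullback of a fixed class.

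\textbf{Main obstacle.} The genuinely delicate point is the dictionary used above: one must check that the obstruction theory of $G$-twisted $\mathcal{O}_{Y}$-modules in $Coh(Y,X)$ agrees with that of the corresponding $\mathcal{A}$-modules (equivalently, $\alpha$-twisted sheaves on $X$), that the $\mathrm{Ext}$-groups and trace maps computed on $Y$ are identified with their twisted counterparts on $X$, and that Serre duality is compatible with these identifications. Once this is in place, the remainder is a transcription of Mukai's theorem to the $G$-twisted setting, exactly as carried out by Yoshioka in \cite{yoshi}.
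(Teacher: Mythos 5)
Your sketch follows essentially the same route as the paper, whose entire proof of this theorem is a citation of \cite[Theorem 2.1, Proposition 3.6, Theorem 3.11]{yoshi}: the GIT/Quot-scheme construction with $G$-twisted stability, the wall-and-chamber argument showing that primitivity of $v_G$ and generality of $H$ exclude strictly semistable sheaves, and the Mukai-style smoothness and symplectic form via (twisted) Serre duality are exactly the content of those cited results. The delicate points you single out (matching GIT stability with $G$-twisted stability, and identifying the deformation theory of sheaves in $Coh(Y,X)$ with that of the corresponding twisted/$\mathcal{A}$-module picture on $X$) are precisely what Yoshioka's proofs supply, so your outline is correct and amounts to an expansion of the citation rather than a different argument.
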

\begin{proof}
See (\cite[Theorem 2.1.,Proposition 3.6., Theorem 3.11.]{yoshi}).
\end{proof}
\begin{rem}\label{mukai}
\normalfont By definition, (\cite[Definition 2.7]{reede}), we have for $E\in Coh_l(X,\mathcal{A})$: 
\begin{equation*}
v_\mathcal{A}(E)=\frac{ch(E)}{\sqrt{ch(\mathcal{A})}}\sqrt{td(X)}.
\end{equation*}
Furthermore in (\cite[Definition 3.1.]{yoshi}) Yoshioka defined for $E\in Coh(Y,X)$:
\begin{equation*}
v_G(E)=\frac{ch(\pi_{*}(E\otimes G^{\vee}))}{\sqrt{ch(\pi_{*}(G\otimes G^{\vee}))}}\sqrt{td(X)}.
\end{equation*}
(Actually Yoshioka defines this vector using the derived direct image, but the sheaf $E\otimes G^{\vee}$ does not have higher direct images for $E\in Coh(Y,X)$ by (\ref{directim}).)\\
Using the equivalence from (\ref{equiv}) we have $v_G(F(E))=v_\mathcal{A}(E)$ and $v_\mathcal{A}(H(E))=v_G(E)$. This is because  $\mathcal{A}^{op}\cong\pi_{*}(G\otimes G^{\vee})$ and $ch(\mathcal{A}^{op})=ch(\mathcal{A})$. So these Mukai vectors are the same and in the following we will omit the subscript and just write $v$ for a fixed Mukai vector.
\end{rem}
If $E\in Coh_l(X,\mathcal{A})$ is given such that $E$ is a torsion-free $\mathcal{A}$-module of rank one with Mukai vector $v=v_\mathcal{A}(E)$, then we have $rk(E)=r^2$. Now $E$ has no $\mathcal{A}$-submodules $E'\subsetneq E$ with $0<rk(E')<r^2$ because any such module must satisfy $r^2|rk(E')$ which is impossible. So $F(E)$ is a torsion-free $\mathcal{O}_Y$-module of rank $r$, has Mukai vector $v_G=v$ and has no submodules in $Coh(Y,X)$ since $F$ preserves submodules. $F(E)$ is therefore $G$-twisted stable.\\
On the other hand if $E\in Coh(Y,X)$ is given and $E$ is a torsion-free $G$-twisted semistable $\mathcal{O}_Y$-module of rank $r$ with $v_G(E)=v$ then $H(E)\in Coh_l(X,\mathcal{A})$ and $H(E)$ is torsion-free and of rank one, as $rk(H(E))=r^2$. We can conclude that $E$ must be $G$-twisted stable, because any submodule $E'\subsetneq E$ in $Coh(Y,X)$ with $0<rk(E')<r$ would give rise to an $\mathcal{A}$-submodule $H(E')\subsetneq H(E)$ in $Coh_l(X,\mathcal{A})$ with $0<rk(H(E'))<r^2$ as $H$ also preserves submodules. But this is impossible since $H(E)$ is a torsion-free $\mathcal{A}$-module of rank one.\\
We have thus proven the following lemma:
\begin{lem}\label{stable}
Assume $v$ is a Mukai vector with $v=v_\mathcal{A}(E)$ for some torsion-free $\mathcal{A}$-module $E$ of rank one, then any torsion-free $G$-twisted semistable $\mathcal{O}_Y$-module of rank $r$ with Mukai vector $v_G=v$ is $G$-twisted stable.
Under the equivalence $Coh_l(X,\mathcal{A})\cong Coh(Y,X)$ torsion-free $\mathcal{A}$-modules of rank one with Mukai vector $v$ correspond to torsion-free $G$-twisted stable $\mathcal{O}_Y$-modules of rank $r$ with Mukai vector $v$.
\end{lem}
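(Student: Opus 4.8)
The plan is to transport the (semi)stability questions across the equivalence of Lemma~\ref{equiv}, using that it preserves Mukai vectors (Remark~\ref{mukai}). Write $F:Coh_l(X,\mathcal{A})\to Coh(Y,X)$ and $H:Coh(Y,X)\to Coh_l(X,\mathcal{A})$ for the two mutually inverse functors. The argument rests on three formal properties of $F$ and $H$ that I would establish first: (a) they send torsion-free $\mathcal{A}$-modules of rank one to torsion-free $\mathcal{O}_Y$-modules of rank $r$ and conversely; (b) they preserve torsion-freeness in general; (c) being equivalences of abelian categories, they induce isomorphisms of subobject lattices, so a monomorphism $\mathcal{F}'\hookrightarrow\mathcal{F}$ in $Coh(Y,X)$ corresponds to a monomorphism $H(\mathcal{F}')\hookrightarrow H(\mathcal{F})$ in $Coh_l(X,\mathcal{A})$ and back. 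For (a) one can use $F(E)\otimes G^{\vee}\cong\pi^{*}E$ and $\mathcal{F}\otimes G^{\vee}\cong\pi^{*}H(\mathcal{F})$ for $\mathcal{F}\in Coh(Y,X)$, together with $rk(\pi^{*}(-))=rk(-)$ and $rk(G)=r$; for (b) one pulls back along an \'etale splitting $f:X'\to X$ of $\mathcal{A}$ as in the proof of Lemma~\ref{directim}, where $\pi$ becomes a projective bundle and $F,H$ become the Morita and pullback functors, for which torsion-freeness is clearly preserved.

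The second ingredient is the numerical constraint on the $\mathcal{A}$-side: if $E$ is a torsion-free $\mathcal{A}$-module of rank one then $rk_{\mathcal{O}_X}(E)=r^2$, and at the generic point $\eta$ the module $E_\eta$ is the simple module $D$ over the division algebra $\mathcal{A}_\eta=D$, so any $\mathcal{A}$-submodule $E'\subseteq E$ has $E'_\eta\in\{0,D\}$; hence $r^2\mid rk_{\mathcal{O}_X}(E')$ and $E$ has no $\mathcal{A}$-submodule of $\mathcal{O}_X$-rank strictly between $0$ and $r^2$.

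With these in hand I would argue as follows. Given a torsion-free $G$-twisted semistable $\mathcal{O}_Y$-module $\mathcal{F}$ of rank $r$ with $v_G(\mathcal{F})=v$: such an $\mathcal{F}$ lies in $Coh(Y,X)$ and, following \cite[Lemma~1.5]{yoshi}, its $G$-twisted (semi)stability need only be tested against subsheaves lying in $Coh(Y,X)$. By (a) and Remark~\ref{mukai}, $H(\mathcal{F})$ is a torsion-free $\mathcal{A}$-module of rank one with $v_\mathcal{A}(H(\mathcal{F}))=v$. If $\mathcal{F}$ had a subsheaf $\mathcal{F}'\subsetneq\mathcal{F}$ in $Coh(Y,X)$ with $0<rk(\mathcal{F}')<r$, then by (c) $H(\mathcal{F}')\subsetneq H(\mathcal{F})$ would have $0<rk_{\mathcal{O}_X}(H(\mathcal{F}'))<r^2$, contradicting the previous paragraph; hence $\mathcal{F}$ is $G$-twisted stable, which is the first assertion. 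For the second, apply $F$ to a torsion-free rank-one $\mathcal{A}$-module $E$ with $v_\mathcal{A}(E)=v$: by (a), (b), (c) and Remark~\ref{mukai}, $F(E)\in Coh(Y,X)$ is torsion-free of rank $r$, has $v_G(F(E))=v$ and has no subsheaf in $Coh(Y,X)$ of intermediate rank, hence is $G$-twisted stable by the first assertion; since $H$ inverts $F$ and respects torsion-freeness, rank and Mukai vector, this yields the claimed bijective correspondence.

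The one genuinely external ingredient, and the step I expect to be the main obstacle for a self-contained account, is the reduction of $G$-twisted (semi)stability of an object of $Coh(Y,X)$ to subsheaves lying in $Coh(Y,X)$, together with the fact that the semistable sheaves of the prescribed rank and Mukai vector automatically lie in $Coh(Y,X)$; granting this from \cite{yoshi}, everything else is bookkeeping about ranks and the formal behaviour of an equivalence of abelian categories.
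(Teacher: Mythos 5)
Your proposal is correct and follows essentially the same route as the paper: rank divisibility at the generic point (the submodules of a rank-one $\mathcal{A}$-module have $\mathcal{O}_X$-rank $0$ or $r^2$), the fact that the equivalence $F,H$ preserves submodules, torsion-freeness, ranks and Mukai vectors, and the reduction of $G$-twisted (semi)stability to subsheaves in $Coh(Y,X)$ taken from Yoshioka. Your write-up is in fact slightly more explicit than the paper's (which leaves the torsion-freeness preservation and the ``test only against subsheaves in $Coh(Y,X)$'' step implicit), but there is no substantive difference in the argument.
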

\begin{rem}
\normalfont The previous lemma (\ref{stable}) shows that given a Mukai vector $v=v_\mathcal{A}(E)$ for some torsion-free $\mathcal{A}$-module $E$, we do not need to worry if the polarization $H$ is general with respect to $v$, see (\ref{yosh}). All torsion-free $G$-twisted semistable $\mathcal{O}_Y$-modules of rank $r$ and with Mukai vector $v_G=v$ are automatically $G$-twisted stable. So $M_H^{Y,G}(v)$ is a smooth projective scheme without choosing a special polarization $H$.  
\end{rem}
Looking at the definition of the moduli functors, we see that we are working with two kind of families of sheaves:
\begin{enumerate}[i)]
\item a family of torsion-free $\mathcal{A}$-modules of rank one with Mukai vector $v$ is a sheaf $\mathcal{E}\in Coh_l(X\times S,\mathcal{A}_S)$ such that $\mathcal{E}$ is flat over $S$ and $\mathcal{E}_s=i_s^{*}\mathcal{E}$ is a torsion-free $\mathcal{A}_s$-module of rank one on $X_s$ with Mukai vector $v$ for every closed point $s\in S$, especially $\mathcal{E}_s\in Coh_l(X_s,\mathcal{A}_s)$.
\item a family of torsion-free $G$-twisted semistable $\mathcal{O}_Y$-modules of rank $r$ with Mukai vector $v$ is a sheaf $\mathcal{F}\in Coh(Y\times S,X\times S)$ such that $\mathcal{F}$ is flat over $S$ and $\mathcal{F}_s=j_s^{*}\mathcal{F}$ is a torsion-free $G$-twisted semistable $\mathcal{O}_Y$-module of rank $r$ on $Y_s$ with Mukai vector $v$ for every closed point $s\in S$, especially $\mathcal{F}_s\in Coh(Y_s,X_s)$.
\end{enumerate}
Now for every $S\in Sch_{\mathbb{C}}$ we have $X\times S\in Sch_{\mathbb{C}}$. Furthermore $\mathcal{A}_S$ is an Azumaya algebra on $X\times S$ with a canonical isomorphism $BS(\mathcal{A}_S)\cong Y\times S$ so lemma (\ref{equiv}) gives us an equivalence of categories
\begin{equation*}
Coh_l(X\times S,\mathcal{A}_S)\cong Coh(Y\times S,X\times S)
\end{equation*}
which has the following property:
\begin{lem}\label{famil}
The equivalence $Coh_l(X\times S,\mathcal{A}_S)\cong Coh(Y\times S,X\times S)$ maps families of type i) to families of type ii) with semistable replaced by stable and vice versa.
\end{lem}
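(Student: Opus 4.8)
The equivalence referred to is the one produced by Lemma~\ref{equiv} for the Azumaya algebra $\mathcal{A}_S$ on $X\times S$, whose associated Brauer--Severi variety is $Y\times S$ by (\ref{functor}). Unwinding that proof, it is the composite of the identification $Coh_l(X\times S,\mathcal{A}_S)\cong Coh_r(X\times S,\mathcal{A}_S^{op})$ with the functor
\[
F_S\colon Coh_r(X\times S,\mathcal{A}_S^{op})\longrightarrow Coh(Y\times S,X\times S),\qquad E\longmapsto \pi_S^{*}E\otimes_{\pi_S^{*}(\mathcal{A}_S^{op})}G_S,
\]
and its quasi-inverse $H_S\colon E\mapsto (\pi_S)_{*}(E\otimes G_S^{\vee})$. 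Since the assertion is symmetric, the plan is to show that $F_S$ sends a family of type i) to a family of type ii) with ``stable'' in place of ``semistable'', and that $H_S$ sends a family of type ii) to a family of type i); I will spell out the first, the second being entirely parallel.

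The first step is to record that $F_S$ is well behaved with respect to restriction to closed fibres. Using the splitting $\pi_S^{*}(\mathcal{A}_S^{op})\cong\mathcal{E}nd_{\mathcal{O}_{Y\times S}}(G_S)$ of (\ref{asplit}), the functor $F_S$ is the composite of the flat pullback $\pi_S^{*}$ with the Morita equivalence $M\mapsto M\otimes_{\mathcal{E}nd(G_S)}G_S$; both constituents are exact, preserve flatness over $S$, and commute with the base changes $i_s$ and $j_s$ of the double pullback diagram, the latter because $G_S$ is $\mathcal{O}_{Y\times S}$-locally free and $j_s^{*}G_S\cong G_s$. Consequently $F_S(\mathcal{E})$ is again flat over $S$, and for every closed point $s\in S$ there is a natural isomorphism
\[
j_s^{*}F_S(\mathcal{E})\;\cong\;F_s(i_s^{*}\mathcal{E})\;=\;F_s(\mathcal{E}_s),
\]
where $F_s$ denotes the corresponding functor over $X_s$ with $G_s=j_s^{*}G_S$. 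For $H_S$ one argues the same way, noting that objects of $Coh(Y\times S,X\times S)$ have no higher direct images under $\pi_S$ by (\ref{directim}), so that $(\pi_S)_{*}$ is exact on this subcategory and commutes with $i_s^{*}$ by cohomology and base change.

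It then remains to invoke the fibrewise analysis already carried out in the text. If $\mathcal{E}$ is of type i), then each $\mathcal{E}_s$ is a torsion-free $\mathcal{A}_s$-module of rank one with Mukai vector $v$, so the discussion preceding Lemma~\ref{stable} shows that $F_S(\mathcal{E})_s\cong F_s(\mathcal{E}_s)$ is a torsion-free $G_s$-twisted \emph{stable} $\mathcal{O}_{Y_s}$-module of rank $r$, and by Remark~\ref{mukai} its Mukai vector is again $v$; together with the $S$-flatness established above, this says exactly that $F_S(\mathcal{E})$ is a family of type ii) with semistable replaced by stable. Conversely, if $\mathcal{F}$ is of type ii), then each $\mathcal{F}_s$ is $G_s$-twisted semistable, hence $G_s$-twisted stable by Lemma~\ref{stable}, and $H_S(\mathcal{F})_s\cong H_s(\mathcal{F}_s)$ is a torsion-free $\mathcal{A}_s$-module of rank one with Mukai vector $v$, so $H_S(\mathcal{F})$ is a family of type i). I expect the only genuine work to be the base-change compatibility and the preservation of $S$-flatness of the previous paragraph; once these are in place the lemma follows formally from Lemmas~\ref{equiv} and \ref{stable} and Remark~\ref{mukai}.
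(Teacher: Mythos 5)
Your proposal is correct and follows essentially the same route as the paper: establish $S$-flatness and compatibility with restriction to the fibres $X_s$, $Y_s$ for the two functors $\mathcal{E}\mapsto\pi_S^{*}\mathcal{E}\otimes_{\pi_S^{*}(\mathcal{A}_S^{op})}G_S$ and $\mathcal{F}\mapsto{\pi_S}_{*}(\mathcal{F}\otimes G_S^{\vee})$, and then conclude fibrewise from Lemma~\ref{stable} and Remark~\ref{mukai}. The only point where the paper is more careful is the pushforward direction: rather than appealing to a general cohomology-and-base-change statement (whose standard form would require flatness over $X\times S$, which is not available), it deduces $i_s^{*}{\pi_S}_{*}(\mathcal{F}\otimes G_S^{\vee})\cong{\pi_s}_{*}(\mathcal{F}_s\otimes G_s^{\vee})$ directly from the defining isomorphism $\pi_S^{*}{\pi_S}_{*}(\mathcal{F}\otimes G_S^{\vee})\cong\mathcal{F}\otimes G_S^{\vee}$ together with Lemma~\ref{directim}, and obtains the $S$-flatness of ${\pi_S}_{*}(\mathcal{F}\otimes G_S^{\vee})$ by faithfully flat descent along $\pi_S$ --- the same ingredients you cite, just assembled without invoking the base-change theorem.
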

\begin{proof}
Let $\mathcal{E}$ be a family of type i). Define $\mathcal{F}:=\pi_S^{*}\mathcal{E}\otimes_{\pi_S^{*}(\mathcal{A}_S^{op})}G_S$, then $\mathcal{F}$ is a family of type ii). We have $\mathcal{F}\in Coh(Y\times S,X\times S)$ and $\mathcal{F}$ is flat over $S$. To see this note that $\pi_S$ is faithfully flat, so $\pi_S^{*}\mathcal{E}$ is flat over $S$, see (\cite[2.2.11 (iii)]{ega}). Furthermore $G_S$ is a flat $\pi_S^{*}(\mathcal{A}_S^{op})$-module, so $\mathcal{F}$ is flat over $S$. We also see that
\begin{equation*}
\mathcal{F}_s=j_s^{*}(\pi_S^{*}\mathcal{E}\otimes_{\pi_S^{*}(\mathcal{A}_S^{op})}G_S)=\pi_s^{*}\mathcal{E}_s\otimes_{\pi_s^{*}(\mathcal{A}_s^{op})}G_s. 
\end{equation*} 
So $\mathcal{F}_s\in Coh(Y_s,X_s)$ and $\mathcal{F}_s$ is a torsion-free $G$-twisted stable $\mathcal{O}_Y$-module of rank $r$ on $Y_s$ with Mukai vector $v$ by (\ref{stable}).\\
Let $\mathcal{F}$ be a family of type ii). Define $\mathcal{E}:={\pi_S}_{*}(\mathcal{F}\otimes G_S^{\vee})$, then $\mathcal{E}$ is family of type i). We have $\mathcal{E}\in Coh_l(X\times S,\mathcal{A}_S)$ and $\mathcal{E}$ is flat over $S$. To see this we note that this can be tested after pullback with a faithfully flat morphism $f:Z\rightarrow X\times S$ by (\cite[2.2.11 (iii)]{ega}), so we just use $f=\pi_S$. But $\pi_S^{*}\mathcal{E}=\pi_S^{*}{\pi_S}_{*}(\mathcal{F}\otimes G_S^{\vee})\cong \mathcal{F}\otimes G_S^{\vee}$ and the latter is flat over $S$ since $\mathcal{F}$ and $G_S^{\vee}$ are. Finally 
\begin{equation*}
\begin{split}
\mathcal{E}_s&=i_s^{*}{\pi_S}_{*}(\mathcal{F}\otimes G_S^{\vee})\cong {\pi_s}_{*}\pi_s^{*}i_s^{*}{\pi_S}_{*}(\mathcal{F}\otimes G_S^{\vee})\\
&={\pi_s}_{*}j_s^{*}\pi_S^{*}{\pi_S}_{*}(\mathcal{F}\otimes G_S^{\vee})\cong {\pi_s}_{*}j_s^{*}(\mathcal{F}\otimes G_S^{\vee})={\pi_s}_{*}(\mathcal{F}_s\otimes G_s^{\vee}).
\end{split}
\end{equation*}
So $\mathcal{E}_s\in Coh_l(X_s,\mathcal{A}_s)$ and $\mathcal{E}_s$ is a torsion-free $\mathcal{A}_s$-module of rank one on $X_s$ with Mukai vector $v$ by (\ref{stable}).
\end{proof}
\begin{thm}
Assume $v=v_\mathcal{A}(E)$ is a Mukai vector for some torsion-free $\mathcal{A}$-module $E$ of rank one, then the functors $\mathcal{M}_{\mathcal{A}/X}(v)$ and $\mathcal{M}^{Y,G}_{H}(v)$ are isomorphic.  
\end{thm}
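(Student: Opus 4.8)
The plan is to promote the relative category equivalence of Lemma \ref{famil} to an isomorphism of moduli functors by checking that it is compatible with base change in the parameter scheme $S$. Fix $S\in Sch_{\mathbb{C}}$. Lemma \ref{famil} already provides mutually inverse maps
\[
F_S(\mathcal{E})=\pi_S^{*}\mathcal{E}\otimes_{\pi_S^{*}(\mathcal{A}_S^{op})}G_S,\qquad H_S(\mathcal{F})={\pi_S}_{*}(\mathcal{F}\otimes G_S^{\vee})
\]
between families of type i) with Mukai vector $v$ over $S$ and families of type ii) with Mukai vector $v$ over $S$; by Lemma \ref{stable} every $G$-twisted semistable member of such a family is automatically $G$-twisted stable, so the two moduli functors really do see the same families. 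Since $F_S$ and $H_S$ are restrictions of an equivalence of categories (Lemma \ref{equiv}), they induce mutually inverse bijections $\mathcal{M}_{\mathcal{A}/X}(v)(S)\to\mathcal{M}^{Y,G}_{H}(v)(S)$ on isomorphism classes. So the content of the theorem is naturality in $S$.

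For a morphism $g\colon S'\to S$ let $g_X\colon X\times S'\to X\times S$ and $g_Y\colon Y\times S'\to Y\times S$ denote the induced morphisms. The square with vertices $Y\times S'$, $Y\times S$, $X\times S'$, $X\times S$ is cartesian, and one has $g_Y^{*}G_S=G_{S'}$, $g_Y^{*}\pi_S^{*}(\mathcal{A}_S^{op})=\pi_{S'}^{*}(\mathcal{A}_{S'}^{op})$ and $\pi_{S'}^{*}g_X^{*}=g_Y^{*}\pi_S^{*}$. For the functor $F$ it then suffices to use that pullback commutes with the tensor product over a sheaf of algebras, which gives $F_{S'}(g_X^{*}\mathcal{E})\cong g_Y^{*}F_S(\mathcal{E})$ compatibly with the adjunction morphisms, so that this is a natural isomorphism. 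For the functor $H$ one uses $g_Y^{*}\mathcal{F}\otimes G_{S'}^{\vee}\cong g_Y^{*}(\mathcal{F}\otimes G_S^{\vee})$ to reduce to comparing ${\pi_{S'}}_{*}g_Y^{*}$ with $g_X^{*}{\pi_S}_{*}$ applied to the sheaf $\mathcal{F}\otimes G_S^{\vee}\in Coh(Y\times S,X\times S)$. The key point is that such a sheaf has vanishing higher direct images along $\pi_S$: exactly as in Remark \ref{mukai}, one has $\mathcal{F}\otimes G_S^{\vee}\cong\pi_S^{*}{\pi_S}_{*}(\mathcal{F}\otimes G_S^{\vee})$, and Lemma \ref{directim} kills the higher direct images of a pullback. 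Since moreover $\pi_S$ is flat and $\mathcal{F}\otimes G_S^{\vee}$ is flat over $S$, flat base change applies and the canonical morphism $g_X^{*}{\pi_S}_{*}(\mathcal{F}\otimes G_S^{\vee})\to{\pi_{S'}}_{*}g_Y^{*}(\mathcal{F}\otimes G_S^{\vee})$ is an isomorphism.

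Combining these, $F$ and $H$ assemble into mutually inverse natural isomorphisms $\mathcal{M}_{\mathcal{A}/X}(v)\to\mathcal{M}^{Y,G}_{H}(v)$, which is the assertion. I expect the only genuinely delicate step to be the base-change statement for $H$: one must verify that the higher-direct-image vanishing persists in the relative situation over $S$, so that the formation of ${\pi_S}_{*}$ commutes with arbitrary (not merely flat) base change, and one must take care that all the identifications above are the canonical ones, so that the diagrams expressing naturality commute on the nose rather than merely up to some unspecified isomorphism. The rest is formal bookkeeping with adjunctions and the projection and base-change formulas.
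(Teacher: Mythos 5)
Your proposal is essentially the paper's proof: both directions of the natural isomorphism are given by $\eta_S([\mathcal{E}])=[\pi_S^{*}\mathcal{E}\otimes_{\pi_S^{*}(\mathcal{A}_S^{op})}G_S]$ and $\Psi_S([\mathcal{F}])=[{\pi_S}_{*}(\mathcal{F}\otimes G_S^{\vee})]$, well-defined and mutually inverse by Lemmas \ref{stable} and \ref{famil}, and naturality of $\eta$ is checked exactly as you indicate, by compatibility of pullback with the relative tensor product. The one point where you deviate, and which you flag as the delicate step, is the naturality of $\Psi$: your appeal to ``flat base change'' is misstated, since that theorem requires the base-change morphism $g_X$ (equivalently $g\colon S'\to S$) to be flat, which is not assumed; flatness of $\pi_S$ or of $\mathcal{F}\otimes G_S^{\vee}$ over $S$ does not substitute for it, and no cohomology-and-base-change argument is needed either. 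The ingredients you already list close this formally, and this is precisely what the paper does: since $\mathcal{F}\in Coh(Y\times S,X\times S)$ one has $\mathcal{F}\otimes G_S^{\vee}\cong \pi_S^{*}{\pi_S}_{*}(\mathcal{F}\otimes G_S^{\vee})$, hence $g_Y^{*}(\mathcal{F}\otimes G_S^{\vee})\cong \pi_{S'}^{*}\,g_X^{*}{\pi_S}_{*}(\mathcal{F}\otimes G_S^{\vee})$ by commutativity of the cartesian square, and applying ${\pi_{S'}}_{*}$ together with ${\pi_{S'}}_{*}\pi_{S'}^{*}\cong \mathrm{id}$ (Lemma \ref{directim}, applicable because $Y\times S'$ is again a Brauer--Severi variety by Remark \ref{functor}) yields the canonical isomorphism ${\pi_{S'}}_{*}g_Y^{*}(\mathcal{F}\otimes G_S^{\vee})\cong g_X^{*}{\pi_S}_{*}(\mathcal{F}\otimes G_S^{\vee})$ for arbitrary $g$. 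With that substitution your argument coincides with the paper's, and the residual worry you raise about commuting ${\pi_S}_{*}$ with arbitrary base change simply dissolves.
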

\begin{proof}
We define a natural transformation $\eta: \mathcal{M}_{\mathcal{A}/X}(v)\rightarrow \mathcal{M}^{Y,G}_{H}(v)$:\\
For $S\in Sch_{\mathbb{C}}$ we let $\eta_S: \mathcal{M}_{\mathcal{A}/X}(v)(S)\rightarrow \mathcal{M}^{Y,G}_{H}(v)(S)$ be the map 
\begin{equation*}
[\mathcal{E}]\mapsto [\pi_S^{*}\mathcal{E}\otimes_{\pi_S^{*}(\mathcal{A}_S^{op})}G_S].
\end{equation*}
This map is well-defined by (\ref{famil}). Let $T\in Sch_{\mathbb{C}}$ with $f: T\rightarrow S$. Then we have 
\begin{align*}
\mathcal{M}_{\mathcal{A}/X}(v)(f)&: \mathcal{M}_{\mathcal{A}/X}(v)(S)\rightarrow \mathcal{M}_{\mathcal{A}/X}(v)(T),\hspace{0.1cm} [\mathcal{E}]\mapsto [f^{*}_X(\mathcal{E})]\\
\mathcal{M}^{Y,G}_{H}(v)(f)&: \mathcal{M}^{Y,G}_{H}(v)(S)\rightarrow \mathcal{M}^{Y,G}_{H}(v)(T),\hspace{0.35cm} [\mathcal{F}]\mapsto [f^{*}_Y(\mathcal{F})]
\end{align*}
where $f_X=id_X\times f$ and $f_Y=id_Y\times f$. This gives the diagram:
\begin{equation*}\begin{CD}
Y\times T @>{f_Y}>> Y\times S @>{q}>> Y\\
@V{\pi_T}VV @V{\pi_S}VV @VV{\pi}V\\
X\times T @>{f_X}>> X\times S @>{p}>> X
\end{CD}\end{equation*}
One computes:
\begin{equation*}
\begin{split}
\eta_T(\mathcal{M}_{\mathcal{A}/X}(v)(f)[\mathcal{E}])&=[\pi_T^{*}f_X^{*}\mathcal{E}\otimes_{\pi_T^{*}(\mathcal{A}_T^{op})}G_T]\\
&=[f_Y^{*}\pi_S^{*}\mathcal{E}\otimes_{f_Y^{*}(\pi_S^{*}(\mathcal{A}_S^{op}))}f_Y^{*}G_S]=\mathcal{M}^{Y,G}_{H}(v)(f)(\eta_S([\mathcal{E}]))
\end{split}
\end{equation*}
Hence 
\begin{equation*}
\eta_T\circ\mathcal{M}_{\mathcal{A}/X}(v)(f)=\mathcal{M}^{Y,G}_{H}(v)(f)\circ\eta_S.
\end{equation*}
So $\eta$ is in fact a natural transformation.\\
Define a second natural transformation $\Psi: \mathcal{M}^{Y,G}_{H}(v)\rightarrow \mathcal{M}_{\mathcal{A}/X}(v)$ by 
\begin{equation*}
\Psi_S: \mathcal{M}^{Y,G}_{H}(v)(S)\rightarrow \mathcal{M}_{\mathcal{A}/X}(v)(S),\hspace{0.2cm} [\mathcal{F}]\mapsto [{\pi_S}_{*}(\mathcal{F}\otimes G_S^{\vee})].
\end{equation*}
Again this map is well-defined by (\ref{famil}). Then one sees
\begin{equation*}
\begin{split}
\Psi_T(\mathcal{M}^{Y,G}_{H}(v)(f)[\mathcal{F}])&=[{\pi_T}_{*}(f_Y^{*}\mathcal{F}\otimes G_T^{\vee})]=[{\pi_T}_{*}(f_Y^{*}(\mathcal{F}\otimes G_S^{\vee}))]\\
&=[{\pi_T}_{*}(f_Y^{*}(\pi_S^{*}{\pi_S}_{*}(\mathcal{F}\otimes G_S^{\vee})))]\\
&=[{\pi_T}_{*}\pi_T^{*}(f_X^{*}({\pi_S}_{*}(\mathcal{F}\otimes G_S^{\vee})))]\\
&=[f_X^{*}({\pi_S}_{*}(\mathcal{F}\otimes G_S^{\vee}))]=\mathcal{M}_{\mathcal{A}/X}(v)(f)(\Psi_S([\mathcal{F}]))
\end{split}
\end{equation*}
and hence $\Psi_T\circ\mathcal{M}^{Y,G}_{H}(v)(f)=\mathcal{M}_{\mathcal{A}/X}(v)(f)\circ\Psi_S$.\\
Finally by what we have already seen $\eta_S$ are $\Psi_S$ are inverse bijections for every $S$, so $\eta$ is a natural isomorphism between these moduli functors.
\end{proof}
\begin{cor}\label{iso}
Assume $v=v_\mathcal{A}(E)$ is a Mukai vector for some torsion-free $\mathcal{A}$-module $E$ of rank one, then the moduli spaces $M_{\mathcal{A}/X}(v)$ and $M_H^{Y,G}(v)$ are isomorphic.
\end{cor}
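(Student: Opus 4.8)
The plan is to deduce this formally from the preceding theorem by invoking the universal property of coarse moduli schemes. Write $\underline{Z}=\mathrm{Hom}_{Sch_{\mathbb{C}}}(-,Z)$ for the functor of points of a scheme $Z$. That $M_{\mathcal{A}/X}(v)$ is a coarse moduli scheme for $\mathcal{M}_{\mathcal{A}/X}(v)$ means there is a natural transformation $\alpha\colon\mathcal{M}_{\mathcal{A}/X}(v)\to\underline{M_{\mathcal{A}/X}(v)}$ which is initial among natural transformations from $\mathcal{M}_{\mathcal{A}/X}(v)$ to representable functors (and which is bijective on $\mathbb{C}$-points, although only the initiality will be needed); likewise there is such a transformation $\beta\colon\mathcal{M}^{Y,G}_{H}(v)\to\underline{M^{Y,G}_{H}(v)}$.

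First I would take the natural isomorphism $\eta$ constructed in the previous theorem together with its inverse $\Psi$, and form the composites $\beta\circ\eta\colon\mathcal{M}_{\mathcal{A}/X}(v)\to\underline{M^{Y,G}_{H}(v)}$ and $\alpha\circ\Psi\colon\mathcal{M}^{Y,G}_{H}(v)\to\underline{M_{\mathcal{A}/X}(v)}$. By the initiality of $\alpha$ the first factors uniquely as $\beta\circ\eta=\underline{f}\circ\alpha$ for a morphism $f\colon M_{\mathcal{A}/X}(v)\to M^{Y,G}_{H}(v)$, and by the initiality of $\beta$ the second factors uniquely as $\alpha\circ\Psi=\underline{g}\circ\beta$ for a morphism $g\colon M^{Y,G}_{H}(v)\to M_{\mathcal{A}/X}(v)$. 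Then I would check that $g$ and $f$ are mutually inverse: from $\underline{g\circ f}\circ\alpha=\underline{g}\circ\beta\circ\eta=\alpha\circ\Psi\circ\eta=\alpha$ and the uniqueness clause in the initiality of $\alpha$ one gets $g\circ f=\mathrm{id}_{M_{\mathcal{A}/X}(v)}$, and symmetrically $f\circ g=\mathrm{id}_{M^{Y,G}_{H}(v)}$. Hence $f$ is the desired isomorphism.

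I do not expect any real obstacle here: the mathematical content is entirely contained in the previous theorem, and this corollary is just the standard fact that isomorphic moduli functors have isomorphic coarse moduli spaces. The only point requiring a moment's care is that the notions of coarse moduli scheme borrowed from the references of Theorems~2.5 and~2.6 both carry the universal property with respect to arbitrary $\mathbb{C}$-schemes, so that the factorization argument applies verbatim; should one wish to avoid that, one may instead observe that $\eta$ is in particular a bijection on $\mathbb{C}$-points, which together with the coarse-moduli bijections already forces $f$ and $g$ to be mutually inverse bijections on points, and then promote this to an isomorphism of schemes via the functoriality over families.
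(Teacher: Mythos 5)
Your argument is correct and is essentially the paper's own proof: the paper likewise invokes the corepresentability (coarse moduli) universal property to turn the natural isomorphisms $\eta$ and $\Psi$ into an isomorphism of the Hom-functors $Hom_{Sch_{\mathbb{C}}}(-,M_{\mathcal{A}/X}(v))\cong Hom_{Sch_{\mathbb{C}}}(-,M_H^{Y,G}(v))$ and concludes by Yoneda, which is exactly your explicit factorization-and-uniqueness argument spelled out. (Your closing alternative via bijectivity on $\mathbb{C}$-points is unnecessary and weaker---a bijection on points alone would not give an isomorphism of schemes---but the main argument stands on its own.)
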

\begin{proof}
Since these schemes are coarse moduli spaces they corepresent their corresponding moduli functors, see (\cite[Definition 2.2.1]{lehn}). The natural isomorphisms $\eta$ and $\Psi$ thus induce natural isomorphisms $Hom_{Sch_{\mathbb{C}}}(-,M_{\mathcal{A}/X}(v))\cong Hom_{Sch_{\mathbb{C}}}(-,M_H^{Y,G}(v))$ by the universal property. But then Yoneda implies $M_{\mathcal{A}/X}(v)\cong M_H^{Y,G}(v)$.
\end{proof}
\begin{cor}\label{main}
Assume $v=v_\mathcal{A}(E)$ is a primitive Mukai vector for some torsion-free $\mathcal{A}$-module $E$ of rank one, then $M_{\mathcal{A}/X}(v)$ is an irreducible symplectic variety deformation equivalent to $Hilb^{\frac{v}{2}+1}(X)$. Moreover we have:
\begin{itemize}
\item $M_{\mathcal{A}/X}(v)\neq \emptyset$ if and only if ${v}^2\geq -2$
\item if ${v}^2=0$, then  $M_{\mathcal{A}/X}(v)$ is a $K3$-surface.
\end{itemize}
\end{cor}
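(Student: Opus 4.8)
The plan is to transport every assertion through the isomorphism of Corollary~\ref{iso} and then quote Yoshioka's structural results for moduli of $G$-twisted sheaves on the Brauer--Severi surface. By Corollary~\ref{iso} we have $M_{\mathcal{A}/X}(v)\cong M_H^{Y,G}(v)$, so it suffices to prove all of the stated properties for the latter scheme. By Remark~\ref{mukai} the Mukai vector $v=v_\mathcal{A}(E)$ agrees with the Mukai vector $v_G$ occurring on the Brauer--Severi side, so $v$ is a primitive element of Yoshioka's (twisted) Mukai lattice on $X$ and, being the image of a torsion-free $\mathcal{A}$-module of rank one, has positive rank $r$. Moreover, by Lemma~\ref{stable} and the remark following it, every torsion-free $G$-twisted semistable sheaf of rank $r$ with Mukai vector $v$ is automatically $G$-twisted stable for \emph{any} polarization $H$, so by Theorem~\ref{yosh} the scheme $M_H^{Y,G}(v)$ is smooth, projective and carries a symplectic form on its tangent bundle. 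I would also note that $v^2$ is even, since the Mukai lattice of $X$ is even, so that $\frac{v^2}{2}+1$ is a nonnegative integer whenever $v^2\geq -2$ and is a legitimate exponent for a Hilbert scheme of points.

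Next I would invoke Yoshioka's theorem from \cite{yoshi}: for a primitive Mukai vector $v$ of positive rank, $M_H^{Y,G}(v)$ is nonempty if and only if $v^2\geq -2$, it has dimension $v^2+2$, and when nonempty it is an irreducible symplectic variety deformation equivalent to $\mathrm{Hilb}^{\frac{v^2}{2}+1}(X)$. Transporting this along the isomorphism $M_{\mathcal{A}/X}(v)\cong M_H^{Y,G}(v)$ gives at once the first bullet, $M_{\mathcal{A}/X}(v)\neq\emptyset\iff v^2\geq -2$, and the main assertion that $M_{\mathcal{A}/X}(v)$ is an irreducible symplectic variety deformation equivalent to $\mathrm{Hilb}^{\frac{v^2}{2}+1}(X)$; the irreducible symplectic property is automatic because a Hilbert scheme of points on a $K3$ surface is irreducible symplectic and this class is stable under smooth deformation. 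For the second bullet, if $v^2=0$ then $\frac{v^2}{2}+1=1$ and $\mathrm{Hilb}^1(X)=X$, so $M_{\mathcal{A}/X}(v)$ is a smooth projective surface deformation equivalent to the $K3$ surface $X$; since being a $K3$ surface (trivial canonical bundle together with vanishing irregularity) is preserved under smooth proper deformation, $M_{\mathcal{A}/X}(v)$ is itself a $K3$ surface.

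I do not expect a serious obstacle here, because the geometric substance lies entirely in \cite{yoshi} and in Corollary~\ref{iso}; the remaining work is bookkeeping. The one point that needs genuine care is verifying that the hypotheses of Yoshioka's theorem are still satisfied after passing through our equivalence: that primitivity of $v$ in the sense attached to $\mathcal{A}$ coincides with primitivity of $v_G$ (this is immediate from Remark~\ref{mukai}, as the two vectors literally agree) and that one does not need to assume the polarization $H$ general with respect to $v$ (this is exactly the content of the remark following Lemma~\ref{stable}). Spelling out the evenness of $v^2$ is the only other small technical item to attend to.
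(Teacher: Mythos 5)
Your proposal is correct and follows exactly the paper's route: transport everything through the isomorphism $M_{\mathcal{A}/X}(v)\cong M_H^{Y,G}(v)$ of Corollary~\ref{iso} and then quote Yoshioka's structural theorem for $M_H^{Y,G}(v)$ (the paper cites \cite[Theorem 3.16.]{yoshi}); your additional remarks on evenness of $v^2$, primitivity, the polarization, and the deformation invariance of the $K3$ property are just careful bookkeeping that the paper leaves implicit.
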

\begin{proof}
Using (\ref{iso}) we have $M_{\mathcal{A}/X}(v)\cong M_H^{Y,G}(v)$. Now the statements follow from (\cite[Theorem 3.16.]{yoshi}).
\end{proof}
\begin{cor}
Assume $v=v_\mathcal{A}(E)$ is a primitive Mukai vector for some torsion-free $\mathcal{A}$-module $E$ of rank one, then we have:
\begin{itemize}
\item $h^{p,q}(M_{\mathcal{A}/X}(v))=h^{p,q}(Hilb^{\frac{v}{2}+1}(X))$
\item $b_i(M_{\mathcal{A}/X}(v))=b_i(Hilb^{\frac{v}{2}+1}(X))$
\end{itemize}
Here $h^{p,q}$ are the Hodge numbers and $b_i$ are the Betti numbers.
\end{cor}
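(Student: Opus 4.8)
The plan is to reduce this statement to the previous corollary, Corollary~\ref{main}, which already establishes that $M_{\mathcal{A}/X}(v)$ is deformation equivalent to $\mathrm{Hilb}^{\frac{v}{2}+1}(X)$. The key observation is that deformation equivalence is an equivalence relation which is strictly finer than the invariants we are asked to compare: a deformation equivalence gives in particular a diffeomorphism between the two smooth projective varieties, and Betti numbers are topological invariants of the underlying manifold, hence preserved under any homeomorphism (in particular any diffeomorphism). So the second bullet point follows immediately.

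For the Hodge numbers the argument needs one extra ingredient beyond mere diffeomorphism. First I would invoke the fact that both $M_{\mathcal{A}/X}(v)$ and $\mathrm{Hilb}^{\frac{v}{2}+1}(X)$ are irreducible symplectic (hyperk\"ahler) varieties: the former by Corollary~\ref{main}, the latter because Hilbert schemes of points on a $K3$-surface are Beauville's original examples. For such varieties one knows the Hodge numbers are deformation invariant. More precisely, a deformation equivalence between two compact K\"ahler manifolds can be realized by a smooth proper family over a connected base; by Ehresmann the fibers are diffeomorphic, and since in a smooth proper family over a connected base the Hodge numbers $h^{p,q}$ are constant (the Hodge numbers are upper semicontinuous and their sum $b_i = \sum_{p+q=i} h^{p,q}$ is locally constant, forcing each $h^{p,q}$ to be locally constant), we conclude $h^{p,q}(M_{\mathcal{A}/X}(v)) = h^{p,q}(\mathrm{Hilb}^{\frac{v}{2}+1}(X))$.

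Concretely I would structure the proof as: (1) apply Corollary~\ref{main} to obtain a deformation equivalence, realized by a smooth proper morphism $\mathcal{X} \to B$ with $B$ connected and with $M_{\mathcal{A}/X}(v)$ and $\mathrm{Hilb}^{\frac{v}{2}+1}(X)$ as two of its fibers; (2) apply the Ehresmann fibration theorem to deduce that all fibers are diffeomorphic, giving equality of Betti numbers; (3) use that in such a family the Hodge-to-de Rham spectral sequence degenerates fiberwise (K\"ahler fibers) so that $h^{p,q}$ is upper semicontinuous while $b_i$ is constant, forcing the Hodge numbers to be constant as well. Alternatively one may simply cite the standard fact that Hodge numbers are deformation invariants of compact K\"ahler manifolds.

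The main obstacle here is essentially bookkeeping rather than mathematics: one must be slightly careful that the ``deformation equivalence'' furnished by Yoshioka's Theorem~3.16 (via Corollary~\ref{main}) really does come packaged as, or can be refined to, a \emph{smooth projective} (or at least smooth K\"ahler) family over a connected base, so that Ehresmann and the Hodge-theoretic semicontinuity apply verbatim; this is implicit in the usual meaning of the term for hyperk\"ahler manifolds and in the moduli-space constructions involved, so no genuine difficulty arises. Everything else is a direct citation of standard facts about compact K\"ahler manifolds and their deformations.
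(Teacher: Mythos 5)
Your proposal is correct and follows exactly the paper's route: the paper also deduces both equalities from Corollary~\ref{main} together with the standard fact that Betti numbers and Hodge numbers are invariant under deformation equivalence. The only difference is that you spell out the standard justification (Ehresmann plus semicontinuity of $h^{p,q}$ against constancy of $b_i$), which the paper simply cites as a known fact.
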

\begin{proof}
This follows from (\ref{main}) using the fact that the Betti numbers and the Hodge numbers are invariant with respect deformation equivalence.
\end{proof}

\addcontentsline{toc}{section}{References}
\bibliography{Artikel}
\bibliographystyle{alphaurl}

\end{document}